\theoremstyle{plain}
\newtheorem{theorem}{Theorem}[section]
\newtheorem{lemma}[theorem]{Lemma}
\newtheorem{proposition}[theorem]{Proposition}
\theoremstyle{definition}
\theoremstyle{remark}
\newtheorem*{remark*}{Remark}
\newtheorem{remark}[theorem]{Remark}
\numberwithin{equation}{section}
\newtheorem*{theorem*}{Theorem} 
\crefname{corollary}{Corollary}{Corollaries}
\crefname{proposition}{Proposition}{Propositions}
\crefname{lemma}{Lemma}{Lemmas}
\crefname{theorem}{Theorem}{Theorems}
\crefname{remark}{Remark}{Remarks}
\newcommand{\R}{{\mathbb R}}
\newcommand{\N}{{\mathbb N}}
\newcommand{\CM}{{\mathcal{M}}}
\newcommand{\CB}{{\mathcal{B}}}
\newcommand{\B}{{\mathcal{B}}}
\def\vint_#1{\mathchoice%
          {\mathop{\kern 0.2em\vrule width 0.6em height 0.69678ex depth -0.58065ex
                  \kern -0.8em \intop}\nolimits_{\kern -0.4em#1}}%
          {\mathop{\kern 0.1em\vrule width 0.5em height 0.69678ex depth -0.60387ex
                  \kern -0.6em \intop}\nolimits_{#1}}%
          {\mathop{\kern 0.1em\vrule width 0.5em height 0.69678ex
              depth -0.60387ex
                  \kern -0.6em \intop}\nolimits_{#1}}%
          {\mathop{\kern 0.1em\vrule width 0.5em height 0.69678ex depth -0.60387ex
                  \kern -0.6em \intop}\nolimits_{#1}}}
\def\vintslides_#1{\mathchoice%
          {\mathop{\kern 0.1em\vrule width 0.5em height 0.697ex depth -0.581ex
                  \kern -0.6em \intop}\nolimits_{\kern -0.4em#1}}%
          {\mathop{\kern 0.1em\vrule width 0.3em height 0.697ex depth -0.604ex
                  \kern -0.4em \intop}\nolimits_{#1}}%
          {\mathop{\kern 0.1em\vrule width 0.3em height 0.697ex depth -0.604ex
                  \kern -0.4em \intop}\nolimits_{#1}}%
          {\mathop{\kern 0.1em\vrule width 0.3em height 0.697ex depth -0.604ex
                  \kern -0.4em \intop}\nolimits_{#1}}}
\newcommand{\intav}{\vint}
\newcommand{\aveint}[2]{\mathchoice%
          {\mathop{\kern 0.2em\vrule width 0.6em height 0.69678ex depth -0.58065ex
                  \kern -0.8em \intop}\nolimits_{\kern -0.45em#1}^{#2}}%
          {\mathop{\kern 0.1em\vrule width 0.5em height 0.69678ex depth -0.60387ex
                  \kern -0.6em \intop}\nolimits_{#1}^{#2}}%
          {\mathop{\kern 0.1em\vrule width 0.5em height 0.69678ex depth -0.60387ex
                  \kern -0.6em \intop}\nolimits_{#1}^{#2}}%
          {\mathop{\kern 0.1em\vrule width 0.5em height 0.69678ex depth -0.60387ex
                  \kern -0.6em \intop}\nolimits_{#1}^{#2}}}
\def\XXint#1#2#3{{\setbox0=\hbox{$#1{#2#3}{\int}$}
\vcenter{\hbox{$#2#3$}}\kern-.5\wd0}}
\newcommand{\dy}{\, \mathrm{d}y}
\newcommand{\loc}{{\mbox{\scriptsize{loc}}}}
\newcommand\intd{\mathop{}\!\mathrm{d}}
\newcommand\tx\text
\newcommand\f\frac
\newcommand\avint\fint
\newcommand\cl[1]{\overline{#1}}
\title[Continuity gradient fractional maximal function]{Continuity of the gradient of the fractional maximal operator on $W^{1,1}(\R^d)$}
\author[D. Beltran, 
C. Gonz\'alez-Riquelme,
J. Madrid and
J. Weigt]{
David Beltran, 
Cristian Gonz\'alez-Riquelme,
Jos\'e Madrid and
Julian Weigt
}
\address{David Beltran: Department of Mathematics, University of Wisconsin, 480 Lincoln Drive, Madison, WI, 53706, USA.}
\email{dbeltran@math.wisc.edu}
\address{Cristian Gonz\'alez-Riquelme: IMPA - Instituto de Matematica Pura e Aplicada, Estrada Dona Castorina, 110, Jardim Botanico,
Rio de Janeiro - RJ, Brazil, 22460-320.}
\email{cristian@impa.br}	
\address{José Madrid: Department of  Mathematics,  University  of  California,  Los  Angeles (UCLA),  Portola Plaza 520, Los  Angeles,
California, 90095, USA}
\email{jmadrid@math.ucla.edu}	
\address{Julian Weigt:
Aalto University,
Department of Mathematics and Systems Analysis,
Otakaari 1,
Espoo,
Finland
}
\email{julian.weigt@aalto.fi}
\keywords{Fractional maximal function, Sobolev spaces, Continuity}
\subjclass[2010]{42B25, 46E35} 
\date\today
\begin{document}

\begin{abstract}
We establish that the map $f\mapsto |\nabla \CM_{\alpha}f|$ is continuous from $W^{1,1}(\mathbb{R}^d)$ to $L^{q}(\mathbb{R}^d)$, where $\alpha\in (0,d)$, $q=\frac{d}{d-\alpha}$ and $\CM_{\alpha}$ denotes either the centered or non-centered fractional Hardy--Littlewood maximal operator. In particular, we cover the cases $d >1$ and $\alpha \in (0,1)$ in full generality, for which results were only known for radial functions.
\end{abstract}

\maketitle

\section{Introduction}
Given $f\in L^1_{\loc}(\R^d)\,$ and $0\leq \alpha<d\,$, the centered fractional Hardy--Littlewood maximal operator is defined by
\begin{equation*}
M_{\alpha} f(x):=\sup_{r >0}
\frac{r^\alpha}{|B(x,r)|}\int_{B(x,r)}|f(y)|\dy
\end{equation*}
for every $x\in\R^d\,$. The non-centered version of $M_\alpha$, denoted by $\widetilde{M}_\alpha$, is defined by taking the supremum over all balls  \(B(z,r)\) such that \(x\) is contained in the closure of \(B(z,r)\). In what follows, we use $\CM_\alpha$ to denote either the centered or non-centered version, in the sense that if we formulate a result or a proof for $\CM_\alpha$, we mean that it holds for both $M_\alpha$ and $\widetilde{M}_\alpha$. The non-fractional case $\alpha=0$ corresponds to the classical maximal function, which we denote by $M=M_0$, $\widetilde{M}=\widetilde{M}_0$  and $\CM=\CM_0$.

The study of regularity properties for $\CM$ and $\CM_{\alpha}$ started with the influential works of Kinnunen \cite{Kinnunen1997} and Kinnunen and Saksman \cite{KS2003}, where it was established that 
\begin{equation}\label{kinnunensinequality}
    |\nabla \CM_\alpha f(x)| \leq  \CM_\alpha |\nabla f|(x)
\end{equation}
a.e.\ in $\R^d$. The mapping properties of $\CM_\alpha$ then imply that the map $f\mapsto \CM_{\alpha}f$ is bounded from $W^{1,p}(\mathbb{R}^d)$ to $W^{1,q}(\mathbb{R}^d)$ when $1<p\le d/\alpha$ and $\frac{1}{q}=\frac{1}{p}-\frac{\alpha}{d}$. At the endpoint $p=1$ this boundedness fails since $\CM_{\alpha}f\notin L^{q}(\mathbb{R}^d)$ unless $f =0$ a.e.. However, one can still consider the following question:
\begin{equation}\label{mapfractional}
\textrm{ Is the map   $f \mapsto |\nabla \CM_\alpha f|$  bounded from $W^{1,1}(\mathbb{R}^d)$ to $L^{\frac{d}{d-\alpha}}(\mathbb{R}^d)$ ?}
\end{equation}
By a dilation argument, this is equivalent to proving that there exists a constant $C>0$ such that
\begin{equation}\label{sobolev boundedness}
    \| \nabla \CM_\alpha f \|_{L^{d/(d-\alpha)}(\R^d)} \leq C \| \nabla f \|_{L^1(\R^d)}. 
\end{equation}

 This question was first explored in the classical case $\alpha=0$ and $d=1$ \cite{Tanaka2002,AP2007,Kurka2010} and, more recently, for $d>1$, radial functions and non-centered $\widetilde{M}$ \cite{Luiro2017}. For $\alpha>0$, this boundedness was first considered in \cite{CM2015}, where the case $d=1$ was settled for $\widetilde{M}_\alpha$. Moreover, they observed that the case $d>1$, $1 \leq \alpha < d $ follows via Sobolev embedding and the smoothing property of $\CM_{\alpha}$ obtained by Kinnunen and Saksman \cite{KS2003}, which ensures, that 
if \(1\leq\alpha<d\) and $f \in L^p(\R^d)$ with $1 \leq p \leq d/\alpha$,
then
\begin{equation}\label{KS}
|\nabla \CM_\alpha f(x)| \leq  (d-\alpha) \CM_{\alpha-1} f(x)
\end{equation}
a.e. in $\R^d$. 

For $0 < \alpha<1$, the first boundedness result in higher dimensions was established for $\widetilde M_{\alpha}$ in \cite{LM2017} for radial functions. Analogous results in both $d=1$ and $d>1$ were obtained for $M_{\alpha}$ in \cite{BM2019.2}, where a pointwise relation between $\nabla M_{\alpha}$ and $\nabla \widetilde M_{\alpha}$ was observed for the first time for \(\alpha>0\). That relation revealed that both operators behave quite similarly, unlike it was previously thought; note that without taking the gradient the two maximal functions are comparable.
Very recently, the question \eqref{mapfractional} was established in full generality by the fourth author in \cite{Weigt} for $\alpha>0$, completing the remaining open cases in the fractional setting (that is, $d>1$, $0 < \alpha < 1$ and general $f$).
He originally proved it for the uncentered operator \(\widetilde M_\alpha\),
but he observed shortly after that almost the same proof also works for the centered operator \(M_\alpha\), see \cite[Remark~1.3]{Weigt}.
The proof in \cite{Weigt} is based on the corresponding bound for the dyadic maximal operator in the non-fractional case \(\alpha=0\) in \cite{weigtdyadic}.
Other interesting related results in the context of fractional maximal functions have recently been proven in \cite{BRS2018,GR,HKKT2015,RSW,Saari2016}.

In this manuscript we explore the continuity of the map \(f\mapsto|\nabla\CM_\alpha f|\) for $\alpha>0$. Note that this map is not sublinear, and thus its boundedness from $W^{1,1}(\R^d)$ to $L^{d/(d-\alpha)}(\R^d)$ does not immediately imply its continuity as a map between those function spaces. For $p>1$, the continuity can be established by the methods developed by Luiro \cite{Luiro2007}, which rely on the Lebesgue space mapping properties of $\CM_\alpha$. Once again, the endpoint case $p=1$ is more intricate. For $d=1$ the continuity was established by the third author \cite{Madrid2017} for the non-centered case and by the first and third authors \cite{BM2019} for the centered case. For $d>1$, similarly to the boundedness, we shall distinguish between the ranges $1 \leq \alpha < d$ and $0 < \alpha < 1$. For the former range, the result can be obtained via the inequality \eqref{KS} and dominated convergence theorem arguments. This was proven in \cite{BM2019}. The range $0 < \alpha < 1$ is harder as one can no longer appeal to \eqref{KS}. 
Positive results under a radial assumption on $f$ were obtained by the first and third authors in both the non-centered \cite{BM2019} and centered case \cite{BM2019.2}. We refer to \cite{CMP2017,CGRM,grk,Luiro2018} for complementary results regarding the continuity of $\widetilde{M}$. 


Here we establish the following complete result for \(\alpha>0\), which in particular yields the continuity in the remaining open cases, that is, for $d>1$, $0 < \alpha < 1$ and general functions $f \in W^{1,1}(\R^d)$.

\begin{theorem}\label{thm:main}
Let $\mathcal{M}_\alpha \in \{\widetilde{M}_\alpha, M_\alpha\}$. If $0 < \alpha < d$, the operator $f \mapsto |\nabla \mathcal{M}_\alpha f|$ maps continuously $W^{1,1}(\R^d)$ into $L^{d/(d-\alpha)}(\R^d)$.
\end{theorem}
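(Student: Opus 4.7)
The plan is to combine the sharp gradient boundedness of \cite{Weigt} with a Brezis--Lieb / Radon--Riesz convergence principle. Write $q = d/(d-\alpha)$, fix $f_n \to f$ in $W^{1,1}(\R^d)$, and set $g_n := |\nabla \mathcal{M}_\alpha f_n|$ and $g := |\nabla \mathcal{M}_\alpha f|$. The goal is $g_n \to g$ in $L^q(\R^d)$. By the standard subsequence principle, it suffices to show that an arbitrary subsequence admits a further subsequence converging to $g$ in $L^q$. After such a reduction, I may assume that $f_n \to f$ and $\nabla f_n \to \nabla f$ almost everywhere and that $|f_n|$ and $|\nabla f_n|$ are dominated by fixed integrable functions. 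The boundedness of \cite{Weigt} then supplies the uniform control $\sup_n \|g_n\|_{L^q} \leq C \sup_n \|\nabla f_n\|_{L^1} < \infty$.

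The first main step is the almost everywhere convergence $\nabla \mathcal{M}_\alpha f_n(x) \to \nabla \mathcal{M}_\alpha f(x)$. The sublinear inequality $|\mathcal{M}_\alpha f_n - \mathcal{M}_\alpha f| \leq \mathcal{M}_\alpha(f_n - f)$ combined with $\mathcal{M}_\alpha(f_n - f) \to 0$ a.e.\ (which follows from the weak-type bound $\mathcal{M}_\alpha \colon L^1 \to L^{d/(d-\alpha),\infty}$) yields pointwise convergence of the maximal functions themselves. To transfer this to gradients, I would adapt the Luiro-type framework of \cite{Luiro2007}: at a.e.\ $x$ where $\mathcal{M}_\alpha f$ is differentiable, $\nabla \mathcal{M}_\alpha f(x)$ is determined by the balls attaining the supremum defining $\mathcal{M}_\alpha f(x)$. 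A compactness argument--using the $L^1$ domination to uniformly bound the optimizing radii of $\mathcal{M}_\alpha f_n$ at $x$ and using $\mathcal{M}_\alpha f_n(x) \to \mathcal{M}_\alpha f(x)$ to identify their subsequential limits with optimizers for $\mathcal{M}_\alpha f(x)$--then yields the almost everywhere convergence of gradients.

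The second main step is to upgrade a.e.\ convergence to strong $L^q$ convergence. Since $q > 1$, the uniform $L^q$ bound together with a.e.\ convergence gives $g_n \rightharpoonup g$ weakly in $L^q(\R^d)$; by the Radon--Riesz property of the uniformly convex space $L^q$, strong convergence reduces to convergence of norms $\|g_n\|_{L^q} \to \|g\|_{L^q}$. The liminf direction is immediate from Fatou's lemma. For the limsup, I would approximate $f$ in $W^{1,1}$ by a smooth, compactly supported $\tilde f$: on a region where $\tilde f$ is well-behaved, Luiro's continuity for $p > 1$ controls the $L^q$ norm of $|\nabla \mathcal{M}_\alpha \tilde f|$, while the error is handled by applying the endpoint bound of \cite{Weigt} to $f_n - \tilde f$, whose $W^{1,1}$ norm is eventually smaller than any prescribed $\varepsilon$. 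Equivalently, one may invoke the Brezis--Lieb lemma to reduce convergence of norms to a vanishing smallness estimate, which is then verified in the same decomposition.

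The main obstacle I foresee is the a.e.\ convergence of gradients in the nonradial case $d > 1$, $0 < \alpha < 1$: without symmetry the geometry of the maximizing balls is considerably more intricate, and one must rule out, on a set of positive measure, pathological limiting behaviour such as radii collapsing to zero or escaping to infinity; this requires a quantitative coupling between the $L^1$ convergence of $f_n$ and the structural representation of $\nabla \mathcal{M}_\alpha$. A secondary technical challenge is the $\limsup$ inequality for norms, where the failure of sublinearity of $f \mapsto |\nabla \mathcal{M}_\alpha f|$ prevents a direct additive decomposition and one must exploit finer structural features of $\mathcal{M}_\alpha$ under $W^{1,1}$ approximation, specific to the endpoint $p = 1$.
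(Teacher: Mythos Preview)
Your proposal has a genuine gap, and you have moreover misidentified where the difficulty lies.

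The almost-everywhere convergence $\nabla \CM_\alpha f_n(x)\to\nabla\CM_\alpha f(x)$ is \emph{not} the obstacle: it is already known in full generality (no radiality assumption) from \cite[Lemma~2.4]{BM2019}, obtained precisely by the Luiro-type argument you sketch. The paper records this as Lemma~\ref{lemma:ae convergence derivatives} (with $\delta=0$). So your ``first main step'' is fine, but your statement that this is ``the main obstacle \dots in the nonradial case'' is incorrect.

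The actual gap is in your ``second main step'', the $\limsup$ inequality $\limsup_n\|g_n\|_q\le\|g\|_q$. Your plan is to approximate $f$ by a smooth $\tilde f$, use Luiro's $p>1$ continuity on $\tilde f$, and control the remainder by Weigt's endpoint bound applied to $f_n-\tilde f$. This does not work: the map $f\mapsto|\nabla\CM_\alpha f|$ is not sublinear, so there is no inequality of the form $\|\nabla\CM_\alpha f_n\|_q\le\|\nabla\CM_\alpha\tilde f\|_q+C\|\nabla(f_n-\tilde f)\|_1$, nor any pointwise version thereof. You acknowledge this at the end, but you offer no mechanism to overcome it; the phrase ``exploit finer structural features'' is not a proof. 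Equivalently, in Brezis--Lieb language you would need $\|g_n-g\|_q\to0$, which is the original statement, or in Vitali language you would need uniform $L^q$-integrability of $\{g_n\}$, which is not delivered by the uniform $L^q$ bound alone.

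The paper proceeds quite differently. It does not attempt to prove convergence of norms. Instead it introduces the truncated operator $\CM_\alpha^\delta$ (supremum over radii $>\delta$), splits
\[
\|\nabla\CM_\alpha f-\nabla\CM_\alpha f_j\|_{L^q(K)}
\le
\|\nabla\CM_\alpha f-\nabla\CM_\alpha^\delta f\|
+
\|\nabla\CM_\alpha^\delta f-\nabla\CM_\alpha^\delta f_j\|
+
\|\nabla\CM_\alpha^\delta f_j-\nabla\CM_\alpha f_j\|,
\]
and handles the first two terms by dominated convergence (the truncation makes the radii bounded below, so Luiro's formula gives a uniform pointwise bound). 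The entire content of the paper is the third term: one must show it is small \emph{uniformly in $j$} as $\delta\to0$. The mechanism is: (i)~on a compact $K$ one has $\CM_\alpha f_j\ge\lambda_0>0$; (ii)~by the Poincar\'e--Sobolev inequality, a good ball $B$ with small radius and large average forces $\widetilde M_\alpha|\nabla f_j|$ to be large on $cB$, so the union $D$ of such dilated balls has small measure; (iii)~the \emph{local} version of Weigt's bound (Theorem~\ref{theo_mfdr}) then gives $\|\nabla\CM_\alpha f_j\|_{L^q(\text{small-radius set})}\lesssim\|\nabla f_j\|_{L^1(D)}$, which is small because $|D|$ is small and $\nabla f_j\to\nabla f$ in $L^1$.

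This small-radius estimate is exactly the missing equi-integrability ingredient that your Radon--Riesz scheme would need; your framework does not bypass it.
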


As observed in \cite{BM2019}, it suffices to establish the continuity for any compact set $K \subset \R^d$. For any given \(\delta>0\), we consider two types of points in $K$, depending on whether the ball with maximal average has 
\textit{large} radius (larger than \(\delta\)) or
\textit{small} radius (smaller than \(\delta\)).
The techniques from \cite{BM2019,BM2019.2} immediately apply to prove the continuity for the points whose maximal ball has \textit{large} radius: the radiality assumption was not used in that situation. 

Thus, in order to establish continuity in Theorem \ref{thm:main}, it suffices to bound contributions coming from points whose maximal ball has
\textit{small} radius, i.e.\ radius smaller than \(\delta\), and show that they go to zero for \(\delta\rightarrow0\).
This is the main novelty of this paper.
To obtain this bound for points with \textit{small} radius, we first note that on any compact set \(K\), \(\CM_\alpha f\) is bounded away from \(0\).
Then we use the Poincaré--Sobolev inequality, which becomes stronger the smaller the radius is and the larger the average of the function is.
Then we apply a refined version of \eqref{KS} which allows us to invoke a local version of the boundedness \eqref{sobolev boundedness} in \cite{Weigt} on the subset of points with \textit{small} radius. This yields the desired result. In the passage, we also use a refined version of \eqref{kinnunensinequality}.

The proof of Theorem \ref{thm:main} is presented in Section \ref{sec:proof}. Auxiliary results which feature prominently in the proof are presented in Sections \ref{2} and \ref{sec:convergences}.

\subsubsection*{Notation} Given a measurable set $E \subseteq \R^d$, 
we denote by $E^c:=\R^d \backslash E$ the complementary set of $E$ in $\R^d$. For $c \in \R$, we denote by $cE$ the concentric set to $E$ dilated by $c$. The integral average of $f \in L^1_{\loc}(\R^d)$ over $E$ is denoted by $f_E\equiv \intav_E f:= |E|^{-1} \int_E f$. Given a ball $B \subseteq \R^d$, we denote its radius by $r(B)$. The volume of the $d$-dimensional unit ball is denoted by $\omega_d$. The weak derivative of $f$ is denoted by $\nabla f$.

\subsubsection*{Acknowledgments}

The authors would like to thank Juha Kinnunen for his encouragement. 
D.B.\ was partially supported by NSF grant DMS-1954479.
J.W.\ has been supported by the Vilho, Yrj\"o and Kalle V\"ais\"al\"a Foundation of the Finnish Academy of Science and Letters.

\section{Families of good balls}\label{2}
In this section we develop some estimates and identities regarding the weak derivative of the maximal functions of interest. We shall only be concerned with $0 < \alpha < d$, although many of the arguments can also be extended to $\alpha=0$.

\subsection{The truncated fractional maximal function}

An important object for our purposes are the truncated fractional maximal operators which, for a given $\delta>0$, are defined as
$$
M_\alpha^\delta f (x):= \sup_{r > \delta} r^\alpha \intav_{B(x,r)} |f(y)| \dy \qquad \text{ and } \qquad \widetilde{M}_\alpha^\delta f (x):= \sup_{\substack{ \bar B(z,r) \ni x \\ r > \delta}} r^\alpha \intav_{B(z,r)} |f(y)|  \dy.
$$
We use $\CM_\alpha^\delta$ to denote either $M_\alpha^\delta$ or $\widetilde{M}_\alpha^\delta$.
Note that if $\delta=0$, we recover the original operators $\CM_\alpha = \CM_\alpha^0$. The following is a well-known and elementary result; see for instance \cite[Lemma 2.4]{BM2019.2} and \cite[Lemma 8]{HM2010}.

\begin{proposition}
Let $0 < \alpha < d$ and $\delta >0$. If $f \in L^1 (\R^d)$, then $\CM_\alpha^\delta f$ is Lipschitz continuous (in particular, a.e.\ differentiable).
\end{proposition}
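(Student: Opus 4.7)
The plan is to establish the Lipschitz estimate by a direct ball-enlargement argument that exploits the lower bound $r > \delta$ on admissible radii. First I would check that $\CM_\alpha^\delta f$ is uniformly bounded: for any ball $B$ of radius $r > \delta$ one has
\[
r^\alpha \intav_B |f| = \frac{1}{\omega_d r^{d-\alpha}} \int_B |f| \leq \frac{\|f\|_{L^1(\R^d)}}{\omega_d \delta^{d-\alpha}},
\]
so $\CM_\alpha^\delta f \in L^\infty(\R^d)$ with norm at most $\omega_d^{-1} \delta^{-(d-\alpha)} \|f\|_{L^1(\R^d)}$.

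Next, for $x_1, x_2 \in \R^d$ with $h := |x_1 - x_2|$, I would replace any admissible ball for $x_1$ by a slightly enlarged ball that is admissible for $x_2$. In the centered case $B(x_1, r)$ becomes $B(x_2, r+h)$, and in the non-centered case any $B(z, r) \ni x_1$ with $r > \delta$ becomes $B(z, r+h)$; the triangle inequality in both situations ensures that the enlarged ball is admissible for $x_2$ and contains the original one. The inclusion then yields
\[
r^\alpha \intav_{B(z,r)} |f| \leq \frac{1}{\omega_d r^{d-\alpha}} \int_{B(z,r+h)} |f| = \Bigl(1+\tfrac{h}{r}\Bigr)^{d-\alpha} (r+h)^\alpha \intav_{B(z,r+h)} |f|,
\]
and since $r > \delta$, taking the supremum over admissible balls gives the multiplicative estimate
\[
\CM_\alpha^\delta f(x_1) \leq \Bigl(1+\tfrac{h}{\delta}\Bigr)^{d-\alpha} \CM_\alpha^\delta f(x_2).
\]

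Finally, combining this with the $L^\infty$ bound from the first step, I would obtain
\[
\CM_\alpha^\delta f(x_1) - \CM_\alpha^\delta f(x_2) \leq \|\CM_\alpha^\delta f\|_{L^\infty(\R^d)} \Bigl[\Bigl(1+\tfrac{h}{\delta}\Bigr)^{d-\alpha}-1\Bigr],
\]
and after exchanging the roles of $x_1$ and $x_2$ and splitting into the regimes $h \leq \delta$ (where the bracket is comparable to $h/\delta$) and $h \geq \delta$ (where the $L^\infty$ bound alone suffices), this would yield a global Lipschitz constant of order $C(d,\alpha)\,\delta^{-(d-\alpha+1)}\,\|f\|_{L^1(\R^d)}$. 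The whole argument is genuinely elementary; the only point requiring a bit of care is checking that the enlargement trick works uniformly for both the centered and non-centered operators, and this is handled identically by the triangle inequality. I do not anticipate any substantial obstacle beyond this bookkeeping.
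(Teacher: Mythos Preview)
Your argument is correct. The paper does not actually prove this proposition; it simply records it as ``well-known and elementary'' and cites \cite[Lemma~2.4]{BM2019.2} and \cite[Lemma~8]{HM2010}. Your ball-enlargement argument is precisely the standard elementary proof those references contain: bound $\CM_\alpha^\delta f$ in $L^\infty$ using $r>\delta$, then compare values at two points by enlarging an admissible ball for one into an admissible ball for the other, picking up the factor $(1+h/\delta)^{d-\alpha}$. The split into $h\le\delta$ and $h>\delta$ to extract the Lipschitz constant is clean and the final constant $C(d,\alpha)\,\delta^{-(d-\alpha+1)}\|f\|_{L^1}$ is the expected one. The only cosmetic point is that in the centered case the enlarged ball changes center (from $x_1$ to $x_2$), so your displayed inequality with $B(z,r)\subset B(z,r+h)$ literally covers only the non-centered case; for the centered case one uses $B(x_1,r)\subset B(x_2,r+h)$ instead, as you indicate in words. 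This is purely notational and does not affect the argument.
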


\subsection{Weak derivative and approximate derivative}\label{subsec:app der}

As mentioned in the introduction, the fourth author proved in \cite{Weigt}, after partial contributions by many, the following result.

\begin{theorem}[{\cite[Theorem~1.1 and Remark~1.3]{Weigt}}]\label{thm:Julian}
Let $0 < \alpha < d$ and $f \in W^{1,1}(\R^d)$. Then $\CM_\alpha f$ is weakly differentiable and there exists a constant \(C_{d,\alpha}>0\) such that
\[
\|\nabla \CM_\alpha f\|_{L^{d/(d-\alpha)}(\R^d)}
\leq C_{d,\alpha}
\| \nabla f\|_{L^1(\R^d)}.
\]
\end{theorem}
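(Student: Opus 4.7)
The plan is to first reduce to the truncated operator $\CM_\alpha^\delta$, which is Lipschitz by the previous proposition, prove a uniform bound $\|\nabla \CM_\alpha^\delta f\|_{L^{d/(d-\alpha)}} \leq C \|\nabla f\|_{L^1}$ with $C$ independent of $\delta$, and then pass to the limit $\delta \to 0$. Kinnunen's pointwise inequality $|\nabla \CM_\alpha f|\leq \CM_\alpha|\nabla f|$ alone is insufficient, since $\CM_\alpha:L^1\to L^{d/(d-\alpha)}$ fails to be bounded, so a finer local analysis is required. Once the uniform bound is in hand, monotone pointwise convergence $\CM_\alpha^\delta f \nearrow \CM_\alpha f$, together with weak compactness in $L^{d/(d-\alpha)}$, should yield the weak differentiability of $\CM_\alpha f$ and the advertised inequality by lower semicontinuity.

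To produce the uniform gradient estimate, at each differentiability point $x$ of $\CM_\alpha^\delta f$ I would select a near-best ball $B_x=B(c_x,r_x)$ with $r_x\geq\delta$ such that
\[
\CM_\alpha^\delta f(x)\approx r_x^\alpha\intav_{B_x}|f|.
\]
A perturbation argument, moving $x$ in a direction $v$ and testing the supremum against a translate of $B_x$, should yield a pointwise bound of the type
\[
|\nabla \CM_\alpha^\delta f(x)|\lesssim \frac{r_x^\alpha}{|B_x|}\int_{\partial B_x}|f|\,d\mathcal{H}^{d-1},
\]
and a Gauss--Green/trace inequality on a slight enlargement of $B_x$ converts this boundary integral into a bulk expression of the form $r_x^{\alpha-1}\int_{cB_x}(|f|/r_x+|\nabla f|)$, which via a Poincar\'e-type inequality can be absorbed into the gradient term alone.

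The principal obstacle is the global summation: assembling these local estimates into the $L^{d/(d-\alpha)}$ bound. A naive Vitali covering overcounts and destroys the critical scaling, so following the strategy of \cite{weigtdyadic}, I would stratify the collection $\{B_x\}$ by dyadic scale, extract at each scale an essentially disjoint subfamily via a stopping-time procedure, and exploit the fact that $d/(d-\alpha)$ is precisely the Sobolev exponent associated with the weight $r^\alpha$. This matching of exponents is what allows the scale-by-scale contributions $\sum_i r_{x_i}^{\alpha d/(d-\alpha)}\|\nabla f\|_{L^1(cB_{x_i})}^{d/(d-\alpha)}$ to be summed geometrically and reassembled into the single quantity $\|\nabla f\|_{L^1}^{d/(d-\alpha)}$. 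The delicate chaining/covering argument on dyadic scales carried out in the non-fractional case in \cite{weigtdyadic}, adapted so as to incorporate the fractional weight $r^\alpha$ while preserving the sharp Sobolev--Poincar\'e constants, is where I expect the bulk of the technical difficulty to lie.
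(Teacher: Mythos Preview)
The paper does not prove this theorem. Theorem~\ref{thm:Julian} is quoted verbatim from \cite{Weigt} (see the sentence preceding its statement: ``the fourth author proved in \cite{Weigt}\ldots the following result''), and the present paper merely uses it as an input. So there is no in-paper proof to compare your proposal against.

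That said, your sketch is broadly consistent with what the paper records about the argument in \cite{Weigt}. The introduction notes that the proof there ``is based on the corresponding bound for the dyadic maximal operator in the non-fractional case \(\alpha=0\) in \cite{weigtdyadic}'', which matches your plan to adapt the dyadic stopping-time/covering machinery with the fractional weight \(r^\alpha\). The pointwise ingredients you describe also line up with tools developed in Section~\ref{2}: the refined Kinnunen--Saksman inequality (Proposition~\ref{pro_ks}) gives exactly the bound \(|\nabla\CM_\alpha^\delta f(x)|\lesssim r_x^{\alpha-1}\intav_{B_x}|f|\), and Lemma~\ref{lem_poincare} is the Poincar\'e step you invoke to convert averages of \(|f|\) on good balls into averages of \(|\nabla f|\). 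The local form of the bound actually used downstream is Theorem~\ref{theo_mfdr}, again cited from \cite{Weigt}.

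One caution: your intermediate pointwise bound via a spherical boundary integral is not the route the paper's Proposition~\ref{pro_ks} takes (it works directly with solid averages and a difference quotient), and as written it would need care since \(f\in W^{1,1}\) need not have an \(L^1\) trace on spheres. If you pursue an independent proof, it is safer to go straight to the refined Kinnunen--Saksman estimate rather than through \(\partial B_x\).
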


It will be convenient in our arguments to also recall the concept of approximate derivative. A function $f: \mathbb{R}^d \to \mathbb{R}$ is said to be {\it approximately differentiable} at a point $x_0\in \mathbb{R}$ if there exists a vector $Df(x_0)\in\R^d$ such that, for any $\varepsilon >0$, the set
\begin{equation}\label{eq:app der}
A_{\varepsilon} := \left\{ x \in \mathbb{R} : \ \frac{|f(x) - f(x_0) - \langle Df(x_0),x- x_0\rangle|}{|x-x_0|} < \varepsilon \right\}
\end{equation}
has $x_0$ as a density point. In this case, $Df(x_0)$ is called the \textit{approximate derivative} of $f$ at $x_0$ and it is uniquely determined. It is well-known that if $f$ is weakly differentiable, then $f$ is approximate differentiable a.e. and the weak and approximate derivatives coincide \cite[Theorem 6.4]{EvansGariepy}.

The approximate derivative satisfies the following property, which will play a rôle in Propositions \ref{lemma:derivative Malpha} and \ref{pro_ks} below.

\begin{lemma}
\label{lem:appderseq}
Let $f$ be approximately differentiable at a point $x \in \R^d$. Then there exists a sequence $\{h_n\}_{n \in \N}$ with $|h_n| \to 0$ such that
\begin{equation*}
    |Df(x)|= - \lim_{n \to \infty} \frac{ f(x+h_n) - f(x)}{|h_n|},
\end{equation*}
where $Df(x)$ denotes the approximate derivative of $f$ at $x$.
\end{lemma}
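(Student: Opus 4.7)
The plan is to choose $h_n$ pointing approximately in the direction of steepest decrease of the affine tangent map $y\mapsto f(x)+\langle Df(x),y-x\rangle$, namely along $-Df(x)/|Df(x)|$, and to extract from the density-point structure of the sets $A_\varepsilon$ in \eqref{eq:app der} a sequence along which the difference quotient approaches $-|Df(x)|$.

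If $Df(x)=0$, then any sequence $h_n\to 0$ with $x+h_n\in A_{1/n}$ (which exists because $x$ is a density point of $A_{1/n}$) trivially gives $(f(x+h_n)-f(x))/|h_n|\to 0=-|Df(x)|$, so assume from now on that $v:=Df(x)\neq 0$ and set $e:=-v/|v|$. For each $n\in\N$, consider the narrow open cone
\[
C_n:=\bigl\{y\in\R^d:\langle y-x,e\rangle>|y-x|\cos(1/n)\bigr\}.
\]
By homogeneity, $|C_n\cap B(x,r)|=c_{n,d}\,r^d$ for some $c_{n,d}>0$. Since $x$ is a density point of $A_{1/n}$, for all sufficiently small $r>0$ the set $A_{1/n}\cap C_n\cap B(x,r)$ has positive measure, and in particular is nonempty. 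Picking $y_n$ in this set with $|y_n-x|\le 1/n$ and setting $h_n:=y_n-x$ produces a sequence with $|h_n|\to 0$.

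For such $h_n$ one has $\langle v,h_n\rangle=-|v|\langle e,h_n\rangle$, and the cone condition combined with Cauchy--Schwarz gives the two-sided bound $-|v|\le \langle v,h_n\rangle/|h_n|\le -|v|\cos(1/n)$. Substituting into the defining inequality of $A_{1/n}$ yields
\[
-|v|-\tfrac{1}{n}\;\le\;\frac{f(x+h_n)-f(x)}{|h_n|}\;\le\;-|v|\cos(1/n)+\tfrac{1}{n},
\]
and letting $n\to\infty$ gives $(f(x+h_n)-f(x))/|h_n|\to -|v|=-|Df(x)|$, which is the desired identity.

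The only mildly subtle step is the cone-plus-density argument used to produce the sequence $\{h_n\}$: the direction cannot be prescribed exactly, because one has no a priori control of $f$ along a single ray, and must instead be selected inside a cone of admissible directions that closes up as $n\to\infty$. Once that point is accepted, the rest is just writing out the defining inequality of $A_{1/n}$ with the cone-based lower bound on $\langle v,h_n\rangle/|h_n|$.
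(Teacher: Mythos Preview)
Your proof is correct and follows essentially the same approach as the paper: both handle the case $Df(x)=0$ trivially, and for $Df(x)\neq 0$ both use the density-point property of $A_\varepsilon$ together with a narrow cone around the direction $-Df(x)/|Df(x)|$ to extract an admissible $h_n$, then squeeze the difference quotient using the cone angle bound and the defining inequality of $A_\varepsilon$. The only cosmetic differences are that the paper parametrises by a small $\varepsilon$ (choosing $\varepsilon$ in terms of $n$ at the end) and writes out the cone-measure lower bound explicitly, whereas you parametrise directly by $1/n$ and invoke homogeneity; the underlying argument is the same.
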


\begin{proof}
Let \(0<\varepsilon<\pi/2\).
By the definition of the approximate derivative,
there exists $0< \rho<\varepsilon$ such that
\begin{equation}\label{eq_approxdiff}
|A_\varepsilon \cap B(0,\rho)|
\geq
\Bigl(
1-
\f{\omega_{d-1}}{d\, \omega_d}(\sin\varepsilon)^{d-1}(\cos\varepsilon)^d
\Bigr)
|B(0,\rho)|
\end{equation}
where $A_\varepsilon$ is as in \eqref{eq:app der}.

If $D f(x)=0$, the result simply follows by the definition of $A_\varepsilon$ and taking $\varepsilon=1/n$.

Assume next $Df(x) \neq 0$. 
For each $h \in \R^d$, let $\beta_h$ denote the angle formed by $h$ and $-D f (x)$, so that
\begin{equation*}
    - \langle Df(x) , h \rangle = |Df(x)||h| \cos \beta_h.
\end{equation*}
\begin{figure}
\includegraphics{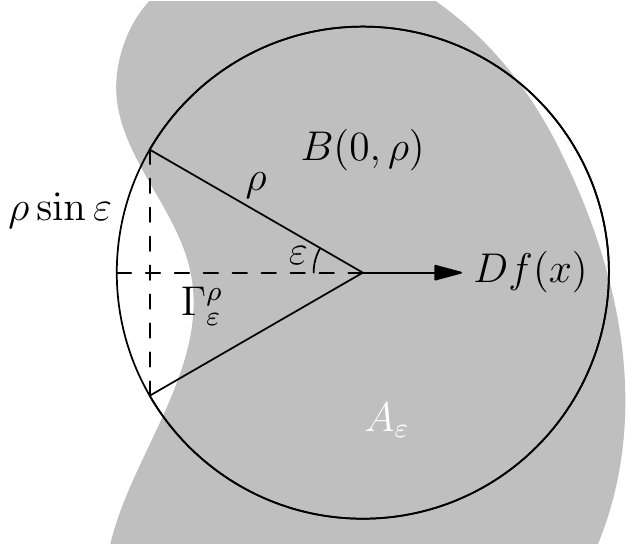}
\caption{The sets \(\Gamma_{\varepsilon,\rho}\) and \(A_\varepsilon\) intersect.}
\end{figure}
The set
\begin{equation*}
    \Gamma_{\varepsilon,\rho} := \{ h \in B(0,\rho): \beta_h \leq \varepsilon \}
\end{equation*}
has measure 
\[
|\Gamma_{\varepsilon,\rho}|
>
\int_0^{\rho\cos\varepsilon}
\omega_{d-1}(r\sin\varepsilon)^{d-1}
\intd r
=
\f{\omega_{d-1}}d(\sin\varepsilon)^{d-1}(\cos\varepsilon)^d\rho^d
.
\]
Thus, it follows from \eqref{eq_approxdiff} that $\Gamma_{\varepsilon,\rho} \cap A_{\varepsilon} \neq \emptyset$, so by the definition of $A_\varepsilon$ there is an \(h\in\R^d\) such that
\begin{equation}\label{eq:properties h}
\f{
| f(x+h)- f(x)-\langle Df(x),h\rangle|
}
{|h|}
<\varepsilon,
\qquad  \beta_h \leq \varepsilon \qquad \text{and} \qquad 0 <|h|<\rho<\varepsilon.
\end{equation}
By the triangle inequality, for $h$ satisfying \eqref{eq:properties h},
\begin{align*}
\biggl|
| Df(x)|
+
\f{
 f(x+h)- f(x)
}
{|h|}
\biggr|
& \leq
\biggl|
|Df(x)|
+
\f{
\langle Df(x),h\rangle
}
{|h|}
\biggr|
+
\biggl|
\f{
 f(x+h)- f(x)
}
{|h|}
-
\f{
\langle Df(x),h\rangle
}
{|h|}
\biggr|
\\
& <
|Df(x)| |1 - \cos \beta_h | + \varepsilon
\\
& \leq
|Df(x)| |1 - \cos \varepsilon | + \varepsilon.
\end{align*}
As $|Df(x)| \neq 0$, the result now follows taking $\varepsilon = \min \{ 1/2n, 1/ \sqrt{|Df(x) n} \}$  and the corresponding \(h_n=h\) from the previous display.
\end{proof}

The approximate derivative of $M f$ for a.e.\ approximately differentiable functions $f \in L^1(\R^d)$ was studied by Haj\l asz and Maly \cite{HM2010}. In particular, their arguments show that if $f \in L^1$ is a.e.\ approximately differentiable, then $\CM_\alpha f$ is a.e.\ approximately differentiable.

\subsection{The families of good balls}\label{subsec:good balls}

Let $0 < \alpha < d$ and $\delta\geq0$. For the uncentered maximal operator, given a function $f \in W^{1,1}(\R^d)$ and a point $x \in \R^d$, define the family of \textit{good balls} for $f$ at $x$ as
$$
\CB_{\alpha,x}^\delta \equiv \CB_{\alpha,x}^{\delta} (f):=\Big\{ B(z,r): r\geq\delta,\ x \in \overline{B(z,r)}, \: \CM_{\alpha}^{\delta} f (x)= r^\alpha \intav_{B(z,r)} |f(y)| \dy \Big\}.
$$
For the centered maximal operator we use the same definition, except that \(z=x\).
Note that $\CB_{\alpha,x}^\delta  \neq \emptyset$ for all $x \in \R^d$ if $\delta>0$.
Moreover, by the Lebesgue differentiation theorem $\CB_{\alpha,x} \equiv \CB_{\alpha, x}^0  \neq \emptyset$ for a.e.\ $x \in \R^d$, and if $B(z,r) \in \CB_{\alpha,x}^0$, then $r>0$. This immediately implies that for a.e.\ $x$ there exists $\delta_x>0$ such that if $0 \leq \delta<\delta_x$, then
\begin{equation*}
    M_\alpha^\delta f(x)=M_\alpha f(x).
\end{equation*}
This type of observation will be used at the derivative level in the forthcoming Lemma \ref{delta convergence}.

\subsection{Luiro's Formula}
An important tool for our purposes is the so called Luiro's formula, which relates the derivative of the maximal function with the derivative of the original function. It corresponds to a refinement of Kinnunen's inequality \eqref{kinnunensinequality} and has its roots in \cite[Theorem 3.1]{Luiro2007}. 

\begin{proposition}\label{lemma:derivative Malpha}
Let $0 < \alpha < d$, $\delta \geq 0$ and $f \in W^{1,1}(\R^d)$. Then, for a.e.\ $x \in \R^d$ and $B=B(z,r) \in \CB^\delta_{\alpha,x}$, the weak derivative $\nabla \CM^\delta_{\alpha}f$ satisfies 
\begin{equation}\label{eq:Luiro}
\nabla \CM_\alpha^\delta f(x) = r^\alpha \intav_{B} \nabla |f|(y) \dy.
\end{equation}
\end{proposition}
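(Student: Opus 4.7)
The plan is to construct a ``competitor'' function around \(x\) that touches \(\CM_\alpha^\delta f\) from below at \(x\), and then extract the identity \eqref{eq:Luiro} from the resulting minimality combined with Lemma~\ref{lem:appderseq}. Fixing a good ball \(B = B(z,r) \in \CB_{\alpha,x}^\delta\), the natural choice is
\[
u(h) := r^\alpha \intav_B |f(y+h)|\dy,
\]
and the target formula will follow from \(\nabla u(0) = r^\alpha \intav_B \nabla|f|(y)\dy\) together with the fact that \(h = 0\) is a global minimum of \(h \mapsto \CM_\alpha^\delta f(x+h) - u(h)\).

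First I would restrict to the full-measure set of \(x\) at which \(\CM_\alpha^\delta f\) is approximately differentiable with approximate derivative equal to the weak derivative: for \(\delta > 0\) this follows from the Lipschitz continuity of \(\CM_\alpha^\delta f\) established above together with Rademacher's theorem, and for \(\delta = 0\) from Theorem~\ref{thm:Julian} combined with the a.e.\ coincidence of weak and approximate derivatives recalled in Section~\ref{subsec:app der}. I would then verify the competitor inequality \(\CM_\alpha^\delta f(x+h) \geq u(h)\): in the non-centered case \(x + h \in \overline{B(z+h,r)}\) with \(r \geq \delta\), so \(\widetilde{M}_\alpha^\delta f(x+h) \geq r^\alpha \intav_{B(z+h,r)} |f|\dy\), which becomes \(u(h)\) after the substitution \(y \mapsto y + h\); the centered case is identical with \(B(x+h,r)\) in place of \(B(z+h,r)\). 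The equality \(\CM_\alpha^\delta f(x) = u(0)\) is built into the definition of a good ball.

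Next I would analyse the regularity of \(u\). Since \(|f| \in W^{1,1}(\R^d)\), an integration-by-parts argument in the \(h\) variable (Fubini combined with the weak differentiability of the translate \(y \mapsto |f|(y+h)\)) shows that \(u\) is weakly differentiable with
\[
\nabla u(h) = r^\alpha \intav_B \nabla|f|(y+h)\dy.
\]
This gradient is continuous in \(h\) by the \(L^1\)-continuity of translation of \(\nabla|f|\), so \(u \in C^1(\R^d)\); in particular \(\nabla u(0) = r^\alpha \intav_B \nabla|f|(y)\dy\).

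Finally, the function \(\Phi(h) := \CM_\alpha^\delta f(x+h) - u(h)\) satisfies \(\Phi \geq 0 = \Phi(0)\), so \(0\) is a global minimum of \(\Phi\). Since both summands are approximately differentiable at \(0\), so is \(\Phi\), with approximate derivative \(\nabla \CM_\alpha^\delta f(x) - \nabla u(0)\). Applying Lemma~\ref{lem:appderseq} to \(\Phi\) produces a sequence \(h_n \to 0\) with \((\Phi(h_n) - \Phi(0))/|h_n| \to -|\nabla \CM_\alpha^\delta f(x) - \nabla u(0)|\); the left-hand side is non-negative since \(0\) is a minimum, forcing \(\nabla \CM_\alpha^\delta f(x) = \nabla u(0) = r^\alpha \intav_B \nabla|f|(y)\dy\), which is exactly \eqref{eq:Luiro}. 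The main obstacle I anticipate is this final step: using Lemma~\ref{lem:appderseq} to convert a minimality statement for an only approximately differentiable function into a first-order vanishing condition, substituting for the classical first-order test that is unavailable when \(\delta = 0\).
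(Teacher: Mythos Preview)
Your proposal is correct and follows essentially the same approach as the paper: your function \(\Phi(h)=\CM_\alpha^\delta f(x+h)-u(h)\) is exactly the paper's \(\varphi(x+h)\), and both arguments conclude by using Lemma~\ref{lem:appderseq} to turn the minimum of this approximately differentiable function into the vanishing of its approximate derivative. Your write-up is slightly more detailed (the explicit verification that \(u\in C^1\) and the separate treatment of the centered and non-centered competitor inequality), but the underlying idea and structure coincide with the paper's proof.
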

\begin{proof} 
This essentially follows from an argument of Haj\l asz and Maly \cite[Theorem 2]{HM2010}, which we include for completeness. 
By \S\ref{subsec:app der} the weak gradient of \(\CM_\alpha^\delta f\) equals its approximate gradient almost everywhere, so it suffices to show \eqref{eq:Luiro} at a point $x$ at which $\CM_\alpha^\delta f$ is approximately differentiable and for which there exists $B=B(z_x,r_x) \in \CB_{\alpha,x}^\delta$.
Define the function $\varphi: \R^d \to \R$ by
\begin{equation*}
    \varphi(y):= \CM_\alpha^\delta f (y) - r^\alpha \intav_{B(z_x + y - x, r_x)} |f(t)| \, \mathrm{d} t = \CM_\alpha^\delta f (y) - r^\alpha \intav_{B(z_x - x, r_x)} |f(y+t)| \, \mathrm{d} t,
\end{equation*}
which satisfies $\varphi \geq 0$ and $\varphi(x) =0$. Thus, $\varphi$ has a minimum at $x$. Furthermore, $\varphi$ is approximately differentiable at $x$ (note that one can differentiate under the integral sign) and by Lemma \ref{lem:appderseq} there exists a sequence $\{h_n\}_{n \in \N}$ with $|h_n| \to 0$ such that
\begin{equation*}
    |D\varphi(x)| = - \lim_{n \to \infty} \frac{\varphi(x+h_n)-\varphi(x)}{|h_n|}.
\end{equation*}
As $\varphi$ has a minimum at $x$, the right-hand side is non-positive and thus $D \varphi(x)=0$, which yields the desired result.
\end{proof}

\begin{remark}
Proposition \ref{lemma:derivative Malpha} continues to hold for $\alpha=0$, replacing the weak derivative by the approximate derivative in the cases where the weak differentiability of $\CM$ is currently unknown.
\end{remark}

\subsection{Refined Kinnunen--Saksman Inequality}

The Kinnunen--Saksman inequality \eqref{KS} admits a refinement in terms of the good balls, in the same spirit as Luiro's formula \eqref{eq:Luiro} improves over Kinnunen's pointwise inequality \eqref{kinnunensinequality}. It is noted that further refinements involving boundary terms (that is, averages along spheres) have been obtained in \cite{LM2017} and \cite{BM2019.2} for $\widetilde{M}_\alpha$ and $M_\alpha$ respectively, although these are not required for the purposes of this paper.

\begin{proposition}
\label{pro_ks}
Let $0<\alpha<d$, $\delta\geq0$ and $f \in W^{1,1}(\R^d)$.
Then, for a.e.\ \(x \in \R^d\) and $B=B(z,r) \in \mathcal{B}_{x,\alpha}^\delta$, the weak derivative $\nabla \CM_\alpha^\delta f$ satisfies
\begin{equation}\label{eq_ks}
|\nabla \CM_{\alpha}^\delta f(x)|
\leq(d-\alpha) r^{\alpha-1} \intav_B |f(y)| \, \mathrm{d} y.
\end{equation}
\end{proposition}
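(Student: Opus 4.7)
The plan is to mimic the ``competitor ball'' strategy used in the proof of Luiro's formula (Proposition~\ref{lemma:derivative Malpha}), but to compete with an \emph{enlarged} ball instead of a translated one, so that the extra factor $r^{-1}$ in \eqref{eq_ks} is produced by differentiating the radius at $r$.

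Fix a point $x\in\R^d$ at which $\CM_\alpha^\delta f$ is approximately differentiable and a good ball $B=B(z,r)\in\CB_{\alpha,x}^\delta$. For each $y$ close to $x$, I would introduce the enlarged competitor
$$
B_y:=
\begin{cases}
B(y,\,r+|y-x|) & \text{in the centered case},\\
B(z,\,r+|y-x|) & \text{in the uncentered case}.
\end{cases}
$$
An immediate check shows $y\in\overline{B_y}$ (using $|y-z|\le |y-x|+|x-z|\le |y-x|+r$ in the uncentered setting), $B\subseteq B_y$, and radius $r+|y-x|\ge r\ge \delta$. Hence $B_y$ is admissible for $\CM_\alpha^\delta f(y)$, and the monotonicity $\int_{B_y}|f|\ge\int_B|f|$ yields, for all $y$ near $x$,
$$
\CM_\alpha^\delta f(y)\ge (r+|y-x|)^\alpha\intav_{B_y}|f|\ge \Bigl(\tfrac{r}{r+|y-x|}\Bigr)^{d-\alpha}\CM_\alpha^\delta f(x).
$$

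To finish, I would apply Lemma~\ref{lem:appderseq} to choose a sequence $h_n\to 0$ realizing
$$
|\nabla \CM_\alpha^\delta f(x)|=-\lim_{n\to\infty}\frac{\CM_\alpha^\delta f(x+h_n)-\CM_\alpha^\delta f(x)}{|h_n|},
$$
combine it with the lower bound above and the elementary expansion $(r/(r+t))^{d-\alpha}=1-(d-\alpha)t/r+O(t^2)$ as $t\to 0^+$, and pass to the limit in the incremental quotient. This yields $|\nabla \CM_\alpha^\delta f(x)|\le (d-\alpha)r^{-1}\CM_\alpha^\delta f(x)=(d-\alpha)r^{\alpha-1}\intav_B|f|$, which is exactly \eqref{eq_ks}. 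The transition from the approximate to the weak derivative uses the coincidence recorded in \S\ref{subsec:app der}.

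The main subtlety I expect is that the prefactor $\bigl(r/(r+|y-x|)\bigr)^{d-\alpha}$ is not differentiable at $y=x$, so one cannot directly carry over the minimizer-of-$\varphi$ argument from Proposition~\ref{lemma:derivative Malpha}. Lemma~\ref{lem:appderseq}, which only requires a one-sided incremental estimate along a single well-chosen sequence, is precisely the tool that circumvents this obstacle; the remaining steps are routine.
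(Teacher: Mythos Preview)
Your proposal is correct and follows essentially the same approach as the paper: both arguments pick an enlarged competitor ball admissible at the nearby point, invoke Lemma~\ref{lem:appderseq} to obtain a one-sided difference quotient realizing $|\nabla\CM_\alpha^\delta f(x)|$, and pass to the limit in the resulting inequality to extract the factor $(d-\alpha)r^{-1}$. The only cosmetic difference is that in the uncentered case the paper uses the translated-and-enlarged ball $B(z+h_n,r+|h_n|)$ rather than your $B(z,r+|y-x|)$; both contain $B(z,r)$ and the nearby point, so the computations are interchangeable.
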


\begin{proof} 
By \S\ref{subsec:app der} the weak gradient of \(\CM_\alpha^\delta f\) equals its approximate gradient almost everywhere, so it suffices to show \eqref{eq_ks} at a point $x$ at which $\CM_\alpha^\delta f$ is approximately differentiable and for which there exists $B=B(z_x,r_x) \in \CB_{\alpha,x}^\delta$.
By Lemma \ref{lem:appderseq} there is a sequence \(\{h_n\}_{n \in \N}\) with \(|h_n|\rightarrow0\)
and
\[
|\nabla\CM_\alpha^\delta f(x)|
=
\lim_{n\rightarrow\infty}
\f{
\CM_\alpha^\delta f(x)-\CM_\alpha^\delta f(x+h_n)
}
{|h_n|}
.
\]
Now the proof follows from the classical Kinnunen--Saksman \cite{KS2003} reasoning, which we include for completeness.
Note that \(x+h_n\in\cl{B(z+h_n,r+|h_n|)}\),
and that for the centered maximal operator we have \(z=x\). This implies
\begin{equation*}
    \CM_\alpha^\delta f(x+h_n) \geq  (r+|h_n|)^\alpha \intav_{B(z+h_n, r+|h_n|)} |f(y)|\dy.
\end{equation*}
Therefore
\begin{align*}
&\f{
\CM_\alpha^\delta f(x)-\CM_\alpha^\delta f(x+h_n)
}
{|h_n|}
\\
& \qquad \leq
\f1{\omega_d|h_n|}
\Big(r^{\alpha-d}\int_{B(z,r)}|f(y)|\dy-(r+h_n)^{\alpha-d}\int_{B(z+h_n,r+|h_n|)}|f(y)|\dy\Big)
\\
& \qquad \leq
\f1{\omega_d|h_n|}
\Big(r^{\alpha-d}\int_{B(z+h_n,r+|h_n|)}|f(y)|\dy-(r+|h_n|)^{\alpha-d}\int_{B(z+h_n,r+|h_n|)}|f(y)|\dy\Big)
\\
& \qquad =
\f
{r^{\alpha-d}-(r+{|h_n|})^{\alpha-d}}
{\omega_d|h_n|}
\int_{B(z+h_n,r+{|h_n|})}|f(y)|\dy
\\
& \qquad \rightarrow
\f
{(d-\alpha)r^{\alpha-d-1}}
{\omega_d}
\int_{B(z,r)}|f(y)|\dy
\end{align*}
for \(n\rightarrow\infty\), which concludes the proof.
\end{proof}

\begin{remark}
Proposition \ref{pro_ks} continues to hold for $\alpha=0$, replacing the weak derivative by the approximate derivative in the cases where the weak differentiability of $\CM$ is currently unknown.
\end{remark}

\subsection{A refined fractional maximal function}

In view of the Kinnunen--Saksman type inequality \eqref{eq_ks}, it is instructive to define the operator
\[
\CM_{\alpha,-1} f(x) = \sup_{B\in\B_{\alpha,x}(f)}r(B)^{\alpha-1} \intav_B |f(y)|\dy,
\]
so that for any $0 < \alpha < d$,
\begin{equation}
\label{eq:maximal Julian 0}
    |\nabla \CM_{\alpha} f(x)| \leq (d-\alpha) \CM_{\alpha,-1} f(x) \qquad \text{for a.e. $x \in \R^d$}.
\end{equation}
Furthermore, this extends to the case $\delta>0$, that is,
\begin{equation}
\label{eq:maximal Julian delta}
    |\nabla \CM_{\alpha}^\delta f(x)| \leq (d-\alpha) \CM_{\alpha,-1} f(x) \qquad \text{for a.e. $x \in \R^d$}.
\end{equation}
Indeed, let $\delta>0$ and $B \in \mathcal{B}_{\alpha,x}^\delta$. Then, there exists $C \in \mathcal{B}_{\alpha,x}$ such that $r(C) \leq r(B)$. This immediately yields
\begin{equation*}
    r(B)^{\alpha-1} \intav_B |f| \leq r(C)^{\alpha-1} \intav_C |f| \leq \CM_{\alpha,-1}f(x),
\end{equation*}
which implies \eqref{eq:maximal Julian delta} via Proposition \ref{pro_ks}.

The proof of Theorem \ref{thm:Julian} in \cite{Weigt} is obtained through the analogous bound on $\CM_{\alpha,-1}$. Indeed, such a bound is of local nature.
The following is a local version of \cite[Theorem~1.2]{Weigt}; see \cite[Remark~1.9]{Weigt}.
\begin{theorem}
\label{theo_mfdr}
Let $0 < \alpha < d$ and $E \subseteq \R^d$. There exist constants $c>1$ and $C_{d,\alpha}>0$ such that the inequality
\[
\|\CM_{\alpha,-1} f\|_{L^{d/(d-\alpha)}(E)}
\leq C_{d,\alpha}
\| \nabla f\|_{L^1(D)}
\]
holds for all $f \in W^{1,1}(\R^d)$, where \begin{equation*}
    D=\bigcup_{B \in \mathcal{I}_E} cB \qquad \text{ and } \qquad  \mathcal{I}_E:=\{ B \in \mathcal{B}_{\alpha,x} \, ; \, \text{for some } \,x \in E\}.   
\end{equation*}
\end{theorem}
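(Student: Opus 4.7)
The plan is to follow the proof of the global bound
$\|\CM_{\alpha,-1} f\|_{L^{d/(d-\alpha)}(\R^d)} \leq C_{d,\alpha}\|\nabla f\|_{L^1(\R^d)}$
carried out in \cite{Weigt}, and to observe that the argument is inherently local, so that when one bounds $\CM_{\alpha,-1} f$ only on $E$, the right-hand side only sees $|\nabla f|$ on fixed dilates of balls from $\mathcal{I}_E$. The global version corresponds to Theorem~1.2 of \cite{Weigt}, while the local refinement is explicitly singled out in \cite[Remark~1.9]{Weigt}.

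The structural steps would be as follows. First, for each $x \in E$ and each $B \in \B_{\alpha,x}(f)$ that realises (or nearly realises) the supremum defining $\CM_{\alpha,-1}f(x)$, the ball $B$ belongs by definition to $\mathcal{I}_E$. A Vitali/Besicovitch-type selection applied to this collection yields an almost disjoint subfamily $\{B_i\}_i \subset \mathcal{I}_E$ whose fixed dilates $\{cB_i\}_i$ still cover the relevant level sets of $\CM_{\alpha,-1}f$ on $E$. Second, for each selected ball one would establish a pointwise Poincaré--Sobolev estimate of the form
\[
r(B_i)^{\alpha-1}\intav_{B_i}|f|\dy
\leq C\, r(B_i)^{\alpha-d}\int_{cB_i}|\nabla f|\dy,
\]
where the good ball property of $B_i$ is used to compare $\intav_{B_i}|f|$ with averages on concentric balls of larger radius, absorbing the mean into a gradient integral on $cB_i$. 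Third, one sums these contributions in $L^{d/(d-\alpha)}(E)$ using the bounded overlap of $\{cB_i\}_i$ and the elementary embedding $\ell^1 \hookrightarrow \ell^{d/(d-\alpha)}$ (valid since $d/(d-\alpha) \geq 1$). The resulting right-hand side is $\sum_i \int_{cB_i}|\nabla f|$, which by bounded overlap is comparable to $\|\nabla f\|_{L^1(D)}$.

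The main obstacle is the second step: transferring from $|f|$, rather than $|f - f_B|$, on the left-hand side to $|\nabla f|$ on the right. This is precisely the heart of \cite{Weigt} and the reason Theorem~\ref{thm:Julian} is delicate in this endpoint. Once this ball-by-ball estimate is available, it is insensitive to the ambient geometry, and the localisation goes through without modification provided one checks that every gradient integral produced by the covering and summation machinery is supported on some $cB$ with $B \in \mathcal{I}_E$, hence on $D$. This bookkeeping is exactly what \cite[Remark~1.9]{Weigt} records, so no new estimate beyond those in \cite{Weigt} is required.
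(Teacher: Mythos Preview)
Your proposal is correct and matches the paper's treatment: the paper does not supply an independent proof of this theorem but simply cites it as the local refinement recorded in \cite[Remark~1.9]{Weigt} of \cite[Theorem~1.2]{Weigt}, exactly as you do. Your additional sketch of the covering--Poincar\'e--summation structure is a reasonable outline of how the localisation propagates through the argument in \cite{Weigt}, and your identification of the ball-by-ball estimate as the nontrivial input (deferred to \cite{Weigt}) is accurate.
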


\begin{remark}
For $0 < \alpha < d$ one has,  combining \eqref{eq:maximal Julian 0} and \cref{theo_mfdr}, that
\begin{equation*}
\|\nabla\CM_\alpha f\|_{L^{d/(d-\alpha)}(E)}
\leq (d-\alpha) \, C_{d,\alpha}
\| \nabla f\|_{L^1(D)},
\end{equation*}
where $C_{d,\alpha}$ is the constant in Theorem \ref{theo_mfdr}.
\end{remark}

\subsection{Poincar\'e--Sobolev Inequality}
Another important tool for our purposes is the following.
\begin{lemma}
\label{lem_poincare}
Let $0 < \alpha< d$, $f\in W^{1,1}(\R^d)$, $x\in\R^{d}$, $B=B(z,r)\in \CB_{\alpha,x}(f)$ 
and \(c>1\).
Then there is a constant \(C_{d,\alpha,c}\) such that
\[
\intav_{cB} |f(y)| \dy
\leq C_{d,\alpha,c}
\, r
\intav_{cB}|\nabla f(y)|\dy
.
\]
\end{lemma}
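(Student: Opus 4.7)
The plan is to parametrize the averages radially around the center $z$ of $B$ and to exploit the maximality of $B$ through the decay of the function $s\mapsto s^\alpha A(s)$ for $s\ge r$. Throughout, let
\[
A(s) := \intav_{B(z,s)} |f|\dy,\qquad S(s) := \intav_{\partial B(z,s)} |f|\,\mathrm{d}\sigma,
\]
be the ball and sphere averages of $|f|$ around $z$. Polar coordinates yield $A(s) = (d/s^d)\int_0^s t^{d-1}S(t)\intd t$, so $A$ is absolutely continuous with $A'(s) = (d/s)(S(s)-A(s))$ for a.e.\ $s$.

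The technical heart of the argument, and where I expect the main obstacle to lie, will be the Poincar\'e-type bound
\[
A(s) - S(s) \;\leq\; \frac{s}{d}\intav_{B(z,s)}|\nabla f|\dy \qquad \text{for every } s>0.
\]
By approximation I may assume $f\in C^\infty(\R^d)\cap W^{1,1}(\R^d)$. Starting from
\[
A(s) - S(s) = \frac{d}{s^d}\int_0^s t^{d-1}\bigl(S(t)-S(s)\bigr)\intd t,
\]
I would estimate $|S(t)-S(s)|\le \int_t^s |S'(u)|\intd u$ where, differentiating the spherical mean, $|S'(u)|\le \intav_{\partial B(z,u)}|\nabla f|\,\mathrm{d}\sigma$. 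Swapping the order of integration via Fubini and then collapsing the radial integral by the identity
\[
\int_0^s u^{d-1}\intav_{\partial B(z,u)}|\nabla f|\,\mathrm{d}\sigma\,\intd u \;=\; \frac{s^d}{d}\intav_{B(z,s)}|\nabla f|\dy
\]
(polar coordinates once more) yields the desired bound.

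The second ingredient is the goodness of $B$: for every $s\ge r$ the ball $B(z,s)\supseteq B(z,r)$ contains $x$ in its closure, hence competes in $\CM_\alpha f(x)$, so $s^\alpha A(s)\le r^\alpha A(r)$. In particular $A(cr)\le c^{-\alpha}A(r)$, and so
\[
A(r)-A(cr) \;\geq\; (1-c^{-\alpha})A(r).
\]

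Combining the two ingredients, integration of $A'$ over $[r,cr]$ together with the Poincar\'e-type bound and the inclusion $B(z,s)\subseteq cB$ for $s\le cr$ yields
\[
A(r)-A(cr) \;=\; \int_r^{cr}\frac{d}{s}\bigl(A(s)-S(s)\bigr)\intd s \;\leq\; \int_r^{cr} \intav_{B(z,s)}|\nabla f|\intd s \;\leq\; C_{d,c}\, r\intav_{cB}|\nabla f|\dy,
\]
the last step by bounding $\intav_{B(z,s)}|\nabla f|\le (cr/s)^d \intav_{cB}|\nabla f|$ and integrating $s^{-d}$ over $[r,cr]$. Combined with the goodness estimate this produces $A(r)\le C'_{d,\alpha,c}\, r\intav_{cB}|\nabla f|$, and hence
\[
\intav_{cB}|f|\dy \;=\; A(cr)\;\le\; c^{-\alpha}A(r) \;\le\; C_{d,\alpha,c}\, r\intav_{cB}|\nabla f|\dy,
\]
which is the desired inequality.
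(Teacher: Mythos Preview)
Your proof is correct, but it takes a different (and longer) route than the paper. The paper argues in two lines: it applies the standard Poincar\'e--Sobolev inequality on $cB$ to get
\[
\intav_{cB}\bigl||f|-|f|_{cB}\bigr|\dy \le C_d\, r\intav_{cB}|\nabla f|\dy,
\]
and then uses the goodness of $B$ in the form $c^\alpha|f|_{cB}\le|f|_B$ together with the triangle inequality to bound the left-hand side below by $c^{-d}(c^\alpha-1)\intav_{cB}|f|$. That is, the paper treats Poincar\'e as a black box and uses goodness only once, at scale $cr$.

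You instead work radially around the centre $z$: you derive a pointwise-in-radius estimate $A(s)-S(s)\le \tfrac{s}{d}\intav_{B(z,s)}|\nabla f|$ from scratch via polar coordinates, and then integrate $-A'$ over $[r,cr]$, using goodness in the continuous form $s^\alpha A(s)\le r^\alpha A(r)$ for all $s\ge r$. This is more elementary in that it does not quote any named inequality, and the intermediate bound on $A(s)-S(s)$ is sharper than what the $(1,1)$-Poincar\'e inequality alone would give; on the other hand it is substantially longer and the extra sharpness is not needed here. Both arguments hinge on the same core observation --- that a good ball forces $|f|_B$ to be strictly larger than $|f|_{cB}$ by a factor $c^\alpha$ --- so the two proofs are morally the same, with the paper's version being the streamlined one.
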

\begin{proof}
By the triangle inequality and the Poincar\'e-Sobolev inequality there is a \(C_d\) such that
\[
\intav_{cB}\big||f(y)|-|f|_{cB}\big| \dy \leq
\intav_{cB}|f(y)-f_{cB}|\dy
\leq C_d \, 
r
\intav_{cB}|\nabla f(y)| \dy
.
\]
Since $B\in \CB_{\alpha,x}$ we have \(c^\alpha |f|_{cB}<|f|_B\).
This and the triangle inequality yield
\[
c^{d}\intav_{cB}\big||f(y)|-|f|_{cB}\big| \dy
\geq
\intav_B\big||f(y)|-|f|_{cB}\big| \dy
\geq
 |f|_B-|f|_{cB}
\geq
(c^{\alpha}-1)\intav_{cB} |f(y)| \dy.
\]
Then, combining the above, we obtain
\[
\intav_{cB} |f(y)| \dy
\leq
\frac{c^dC_d}{c^\alpha-1}
r
\intav_{cB}|\nabla f(y)|\dy,
\]
as desired.
\end{proof}

\section{Convergences}\label{sec:convergences}

In this section we review some auxiliary convergence results established in the series of papers \cite{CMP2017, BM2019} which reduce the proof of Theorem \ref{thm:main} to the convergence of the difference $\CM_\alpha f_j-\CM_\alpha^\delta f_j$ on a compact set.

\subsection{A Sobolev space lemma}

We start recalling an auxiliary result concerning the convergence of the modulus of a sequence in $W^{1,1}(\R^d)$. This is useful in view of the identity \eqref{eq:Luiro}. 

\begin{lemma}[{\cite[Lemma 2.3]{BM2019}}]\label{lemma:convergence modulus in W11}
Let $f \in W^{1,1}(\R^d)$ and $\{f_j\}_{j \in \N} \subset W^{1,1}(\R^d)$ be such that $\| f_j - f \|_{W^{1,1}(\R^d)} \to 0$ as $j \to \infty$. Then $\big\| |f_j| - |f| \big\|_{W^{1,1}(\R^d)} \to 0$ as $j \to \infty$.
\end{lemma}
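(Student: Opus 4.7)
The plan is to split the $W^{1,1}$ norm into the $L^1$ norm of the functions themselves and the $L^1$ norm of their weak gradients, and treat the two pieces separately. The convergence $\||f_j|-|f|\|_{L^1(\R^d)}\to 0$ is immediate from the reverse triangle inequality $\big||f_j(x)|-|f(x)|\big|\leq |f_j(x)-f(x)|$, so the content of the lemma lies in showing $\|\nabla|f_j|-\nabla|f|\|_{L^1(\R^d)}\to 0$.

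For the gradient part I would invoke the standard chain rule for Sobolev functions, which gives $\nabla|g|=\sgn(g)\,\nabla g$ almost everywhere for any $g\in W^{1,1}(\R^d)$, with the convention $\sgn(0)=0$. A key auxiliary fact is that $\nabla g=0$ almost everywhere on the level set $\{g=0\}$; this holds for every Sobolev function. Using this, I would decompose
\[
\sgn(f_j)\,\nabla f_j-\sgn(f)\,\nabla f = \sgn(f_j)\,(\nabla f_j-\nabla f) + (\sgn(f_j)-\sgn(f))\,\nabla f.
\]
The first summand is bounded in absolute value by $|\nabla f_j-\nabla f|$, whose $L^1$ norm tends to zero by hypothesis.

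The main task is to control the second summand. The difficulty is that $\sgn$ is discontinuous and $f_j\to f$ only in $W^{1,1}$ (hence in measure), so $\sgn(f_j)$ need not converge to $\sgn(f)$ pointwise. I would handle this through a subsequence argument: to show the full sequence converges, it suffices to check that every subsequence has a further subsequence along which $(\sgn(f_{j_k})-\sgn(f))\,\nabla f\to 0$ in $L^1$. Given any subsequence, extract a further subsequence $\{f_{j_k}\}$ converging to $f$ almost everywhere, which is possible since $f_j\to f$ in $L^1$. On the set $\{f\neq 0\}$, pointwise convergence of $f_{j_k}$ to $f$ forces $\sgn(f_{j_k})\to\sgn(f)$ almost everywhere, so the integrand tends to zero pointwise there; on $\{f=0\}$, the factor $\nabla f$ vanishes almost everywhere by the Sobolev chain-rule fact recalled above, so the integrand is zero almost everywhere there too. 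Hence $(\sgn(f_{j_k})-\sgn(f))\,\nabla f\to 0$ almost everywhere, and the dominating function $2|\nabla f|\in L^1(\R^d)$ allows us to conclude $L^1$ convergence by the dominated convergence theorem.

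Combining the two pieces yields $\|\nabla|f_j|-\nabla|f|\|_{L^1(\R^d)}\to 0$ along the extracted subsequence, and the subsequence principle upgrades this to convergence of the full sequence. Together with the easy $L^1$ bound for the functions themselves, this gives the claimed convergence in $W^{1,1}(\R^d)$. The only genuinely delicate point is the treatment of the zero set of $f$, which is handled exactly by the vanishing of $\nabla f$ there.
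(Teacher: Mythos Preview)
The paper does not actually prove this lemma; it is quoted verbatim from \cite[Lemma~2.3]{BM2019} and used as a black box. Your argument is correct and is essentially the standard proof one expects to find in that reference: the reverse triangle inequality handles the $L^1$ part, and for the gradient you use the chain rule $\nabla|g|=\sgn(g)\nabla g$, split $\sgn(f_j)\nabla f_j-\sgn(f)\nabla f$ into an ``easy'' piece and a ``sign'' piece, and dispatch the latter via a subsequence/dominated convergence argument, crucially exploiting that $\nabla f=0$ a.e.\ on $\{f=0\}$.

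One small stylistic remark: your final paragraph applies the subsequence principle to the \emph{combined} quantity $\nabla|f_j|-\nabla|f|$ rather than just to the second summand, even though the first summand already converges along the full sequence. This is harmless (the argument still goes through), but it would be cleaner to run the subsequence principle on $(\sgn(f_j)-\sgn(f))\nabla f$ alone and then add the two full-sequence limits. There is no genuine gap.
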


\subsection{Convergence outside a compact set}

By Theorem \ref{thm:Julian} and the work of the first and third author in \cite{BM2019} we have that it suffices to study the convergence in a compact a set.
\begin{proposition}[{\cite[Proposition 4.10]{BM2019}}]\label{prop:smallness}
Let $0	<  \alpha < d$, $f \in W^{1,1}(\R^d)$ and $\{f_j\}_{j \in \N} \subset W^{1,1}(\R^d)$ such that $\| f_j - f \|_{W^{1,1}(\R^d)} \to 0$. Then, for any $\varepsilon>0$ there exists a compact set $K$ and $j_\varepsilon>0$ such that
$$
\| \nabla \CM_\alpha f_j - \nabla \CM_\alpha f \|_{L^{d/(d-\alpha)}( ( 3K)^c )} < \varepsilon
$$
for all $j \geq j_\varepsilon$.
\end{proposition}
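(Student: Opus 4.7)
Plan. I would split the $L^{d/(d-\alpha)}$ norm via the triangle inequality,
\[
\|\nabla\CM_\alpha f_j - \nabla\CM_\alpha f\|_{L^{d/(d-\alpha)}((3K)^c)}
\leq \|\nabla\CM_\alpha f_j\|_{L^{d/(d-\alpha)}((3K)^c)} + \|\nabla\CM_\alpha f\|_{L^{d/(d-\alpha)}((3K)^c)},
\]
and treat the two pieces separately. The $f$-term is handled by Theorem \ref{thm:Julian}: since $\nabla\CM_\alpha f\in L^{d/(d-\alpha)}(\R^d)$, the absolute continuity of the Lebesgue integral supplies a compact $K_0$ for which the second summand is less than $\varepsilon/2$ whenever $K\supseteq K_0$, independently of $j$. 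The entire difficulty then concentrates on obtaining a \emph{uniform-in-$j$} tail bound for $\nabla\CM_\alpha f_j$.

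The plan for the uniform tail is the following. Because $f\mapsto\nabla\CM_\alpha f$ is not sublinear, there is no direct reduction to $\nabla\CM_\alpha(f_j-f)$. Instead I would dominate pointwise: combining the refined Kinnunen--Saksman inequality of \cref{pro_ks} with the Poincar\'e--Sobolev estimate of \cref{lem_poincare} on a good ball $B\in\CB_{\alpha,x}(f_j)$ yields an estimate of Kinnunen type
\[
|\nabla\CM_\alpha f_j(x)| \leq C_{d,\alpha,c}\,\CM_\alpha|\nabla f_j|(x) \qquad \text{a.e. } x\in\R^d.
\]
Next I exploit the fact that $\|\nabla f_j-\nabla f\|_{L^1}\to 0$ implies $L^1$-tightness of $\{|\nabla f_j|\}_j$: given $\eta>0$, there is a compact $K_1$ and $j_1$ with $\|\nabla f_j\|_{L^1(K_1^c)}<\eta$ for all $j\geq j_1$. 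Splitting $|\nabla f_j| = |\nabla f_j|\chi_{K_1} + |\nabla f_j|\chi_{K_1^c}$ and using the sublinearity of $\CM_\alpha$ reduces matters to controlling the maximal function of each piece on $(3K)^c$ for $K\supseteq K_1$. The compactly supported piece is easy: for $x\in(3K)^c$ every admissible ball $B\ni x$ that meets $\mathrm{supp}(|\nabla f_j|\chi_{K_1})\subseteq K_1$ must have radius $\gtrsim\dist(x,K_1)$, giving a polynomial pointwise decay that integrates to an arbitrarily small quantity in $L^{d/(d-\alpha)}((3K)^c)$ once $K$ is chosen large enough relative to $K_1$.

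The main obstacle is the tail piece $\CM_\alpha(|\nabla f_j|\chi_{K_1^c})$, because the plain endpoint bound for $\CM_\alpha$ is only of weak type $L^1\to L^{d/(d-\alpha),\infty}$, which is not sufficient to extract strong $L^{d/(d-\alpha)}$ smallness from an $L^1$-small input. To bypass this I would not run through $\CM_\alpha|\nabla f_j|$ at this step; rather, I would apply the local sharp boundedness of \cref{theo_mfdr} directly to $f_j$ with $E=(3K)^c$, obtaining
\[
\|\nabla\CM_\alpha f_j\|_{L^{d/(d-\alpha)}((3K)^c)} \leq C_{d,\alpha}\,\|\nabla f_j\|_{L^1(D_j)},
\]
and then arguing that, after a careful enlargement of $K$, the geometric set $D_j$ of $c$-dilated good balls for $f_j$ at points of $(3K)^c$ is contained (up to an $L^1$-negligible mass region of $\nabla f_j$) in the complement of $K_1$, so that the right-hand side is bounded by $C\eta$ uniformly in $j\geq j_1$. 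Adding the two contributions and choosing $\eta$ appropriately small in terms of $\varepsilon$ would complete the proof. The delicate point throughout is this last geometric/covering step, which is where the proof in \cite{BM2019} presumably invests the most work.
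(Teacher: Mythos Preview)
The paper does not prove this proposition; it is quoted verbatim from \cite[Proposition~4.10]{BM2019}, with the remark that the boundedness input is now supplied by \cref{thm:Julian}. So there is no in-paper argument to compare against; I comment only on the soundness of your plan.

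Your proposal contains a genuine gap, and it sits precisely where you called the argument ``easy''. After dominating $|\nabla\CM_\alpha f_j|\leq C\,\widetilde M_\alpha|\nabla f_j|$ and splitting $|\nabla f_j|=|\nabla f_j|\chi_{K_1}+|\nabla f_j|\chi_{K_1^c}$, you claim that the compactly supported piece gives ``polynomial pointwise decay that integrates to an arbitrarily small quantity in $L^{d/(d-\alpha)}((3K)^c)$''. But the decay you actually obtain is only $\widetilde M_\alpha(|\nabla f_j|\chi_{K_1})(x)\lesssim \dist(x,K_1)^{\alpha-d}\|\nabla f_j\|_1$, and
\[
\int_{(3K)^c}\dist(x,K_1)^{(\alpha-d)\cdot\frac d{d-\alpha}}\dx
=\int_{(3K)^c}\dist(x,K_1)^{-d}\dx=\infty
\]
for every choice of $K\supseteq K_1$. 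This is the borderline non-integrable case, so no enlargement of $K$ saves you. The same endpoint failure is of course exactly why your tail piece $\widetilde M_\alpha(|\nabla f_j|\chi_{K_1^c})$ cannot be controlled in strong $L^{d/(d-\alpha)}$ either; the two cases are not of different difficulty, they fail for the same reason. Your fallback via \cref{theo_mfdr} is also incomplete as stated: good balls at points of $(3K)^c$ can have arbitrarily large radius and swallow $K_1$ entirely (take e.g.\ $f$ compactly supported), so there is no reason $D_j$ should miss $K_1$.

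The missing idea is to split not the \emph{function} $|\nabla f_j|$ but the \emph{points} $x\in(3K)^c$, according to whether the good ball at $x$ reaches into $K$ or not. If some $B\in\CB_{\alpha,x}(f_j)$ satisfies $cB\cap K\neq\emptyset$ then $r(B)\gtrsim|x|$, and now the \emph{un-Poincar\'e'd} Kinnunen--Saksman bound \eqref{eq_ks} gives the extra factor $r(B)^{-1}$ you need:
\[
|\nabla\CM_\alpha f_j(x)|\leq(d-\alpha)\,r(B)^{\alpha-1}|f_j|_B\lesssim|x|^{\alpha-d-1}\|f_j\|_1,
\]
and $|x|^{\alpha-d-1}\in L^{d/(d-\alpha)}(\{|x|>R\})$ with norm tending to $0$ as $R\to\infty$. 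On the complementary set, every good ball has $cB\subseteq K^c$, so the set $D$ from \cref{theo_mfdr} genuinely lies in $K^c$ and that theorem yields $\|\nabla\CM_\alpha f_j\|_{L^{d/(d-\alpha)}}\leq C\|\nabla f_j\|_{L^1(K^c)}$, which is uniformly small for large $K$ and large $j$. In short: keep \cref{pro_ks} but do \emph{not} feed it through \cref{lem_poincare} on the large-radius set, since it is precisely the gained power of $r^{-1}$ that pushes the decay from the critical $|x|^{\alpha-d}$ to the integrable $|x|^{\alpha-d-1}$.
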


\subsection{Continuity of \texorpdfstring{$\CM_\alpha^\delta$}{M a d} in \texorpdfstring{$W^{1,1}(\mathbb{R}^d)$, $\delta>0$}{W11(Rd), d>0}}

A key observation is the a.e.\ convergence of the maximal function $\CM_\alpha^\delta f_j$ at the derivative level.

\begin{lemma}\label{lemma:ae convergence derivatives}
Let $0 <\alpha < d$, $ \delta \geq 0$, $f \in W^{1,1}(\R^d)$ and $\{f_j\}_{j \in \N} \subset W^{1,1}(\R^d)$ be such that $\| f_j - f \|_{W^{1,1}(\R^d)} \to 0$ as $j \to \infty$. Then
\begin{equation*}
\nabla \CM_\alpha^\delta f_j (x) \to \nabla \CM_\alpha^\delta f(x) \quad \text{a.e.} \quad as \:\:j \to \infty.
\end{equation*}
\end{lemma}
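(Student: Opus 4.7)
The plan is to use Luiro's formula (Proposition \ref{lemma:derivative Malpha}) to express both $\nabla \CM_\alpha^\delta f_j(x)$ and $\nabla \CM_\alpha^\delta f(x)$ as weighted averages of $\nabla|f_j|$ and $\nabla|f|$ over good balls, and then pass to the limit by combining a compactness argument on these good balls with the $L^1$-convergence $\nabla|f_j| \to \nabla|f|$ supplied by Lemma \ref{lemma:convergence modulus in W11}.

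I would work at a point $x$ in a full-measure set on which Luiro's formula holds simultaneously for $\CM_\alpha^\delta f$ and every $\CM_\alpha^\delta f_j$, $\CM_\alpha^\delta f(x) > 0$, $x$ is a Lebesgue point of $|f|$, and $\CM_\alpha^\delta f_j(x) \to \CM_\alpha^\delta f(x)$. For $\delta > 0$ this last convergence is immediate from the sublinear bound $|\CM_\alpha^\delta f_j(x) - \CM_\alpha^\delta f(x)| \leq \CM_\alpha^\delta(f_j - f)(x) \leq \omega_d^{-1} \delta^{\alpha-d}\|f_j - f\|_{L^1(\R^d)}$; for $\delta = 0$ the weak-type boundedness of $\CM_\alpha$ gives convergence in measure, hence a.e.\ along a subsequence, which suffices since only a.e.\ convergence is claimed. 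For each $j$, I would choose $B_j = B(z_j, r_j) \in \CB_{\alpha,x}^\delta(f_j)$.

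The crux is to show that $\{B_j\}$ is precompact and every Hausdorff limit lies in $\CB_{\alpha,x}^\delta(f)$. An upper bound on $r_j$ follows by combining
\[
r_j^\alpha \intav_{B_j}|f_j| = \CM_\alpha^\delta f_j(x) \to \CM_\alpha^\delta f(x) > 0
\]
with the trivial estimate $r_j^\alpha \intav_{B_j}|f_j| \leq \omega_d^{-1} r_j^{\alpha - d}\|f_j\|_{L^1(\R^d)}$ and the fact that $\|f_j\|_{L^1(\R^d)}$ stays bounded. For $\delta > 0$ the lower bound $r_j \geq \delta$ is free; for $\delta = 0$, if $r_j \to 0$ along a subsequence then the Lebesgue point property at $x$ yields $\intav_{B_j}|f_j| \to |f|(x) < \infty$, so $r_j^\alpha \intav_{B_j}|f_j| \to 0$, contradicting positivity. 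Since $x \in \overline{B_j}$, the centers $z_j$ are bounded as well, so by passing to a subsequence I obtain $B_j \to B_* = B(z_*, r_*)$ in Hausdorff distance with $r_* > 0$. Using $f_j \to f$ in $L^1(\R^d)$ together with this convergence,
\[
r_*^\alpha \intav_{B_*} |f| = \lim_j r_j^\alpha \intav_{B_j}|f_j| = \CM_\alpha^\delta f(x),
\]
so $B_* \in \CB_{\alpha,x}^\delta(f)$.

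Now Luiro's formula gives $\nabla \CM_\alpha^\delta f_j(x) = r_j^\alpha \intav_{B_j} \nabla|f_j|$ and $\nabla \CM_\alpha^\delta f(x) = r_*^\alpha \intav_{B_*} \nabla|f|$, and Lemma \ref{lemma:convergence modulus in W11} combined with the Hausdorff convergence of the balls and $r_* > 0$ forces the convergence of these averages. Since every subsequence of $\{B_j\}$ admits a further subsequence on which this works and the limit is always $\nabla \CM_\alpha^\delta f(x)$, the full sequence converges. I expect the main obstacle to be the compactness step in the case $\delta = 0$: the lower bound on $r_j$ is no longer automatic, and one must use the Lebesgue point property, the assumption $\alpha > 0$, and the positivity of $\CM_\alpha f(x)$ to rule out degeneration $r_j \to 0$.
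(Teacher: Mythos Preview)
Your approach is the same as the one the paper invokes (it defers to \cite[Lemma~2.4]{BM2019}, which is precisely the Luiro-formula-plus-compactness-of-good-balls argument you outline), and for $\delta>0$ your argument is correct: the lower bound $r_j\geq\delta$ makes the compactness of $\{B_j\}$ automatic. This is also the only case the paper actually uses, in Proposition~\ref{thm:large radious}.

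For $\delta=0$ there is a real gap. Your lower bound on $r_j$ rests on the claim that ``the Lebesgue point property at $x$ yields $\intav_{B_j}|f_j|\to|f|(x)$'', but Lebesgue differentiation only gives $\intav_{B_j}|f|\to|f|(x)$; the remainder satisfies $\intav_{B_j}|f_j-f|\leq M(f_j-f)(x)$, and $\|f_j-f\|_{L^1}\to0$ does \emph{not} force $M(f_j-f)(x)\to0$ at a.e.\ fixed $x$ along the full sequence. Relatedly, the remark that convergence in measure of $\CM_\alpha f_j$ to $\CM_\alpha f$ gives a.e.\ convergence along a subsequence ``which suffices since only a.e.\ convergence is claimed'' is not a valid reduction: the exceptional null set depends on the chosen subsequence, so the subsequence-of-a-subsequence trick you invoke at the end cannot upgrade this to a.e.\ convergence of the full sequence (the typewriter sequence illustrates why this fails). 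To close the $\delta=0$ case you need an independent argument ruling out $r_j\to0$ at a.e.\ $x$ for the \emph{full} sequence; this is where additional input beyond what you have written is required.
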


A version of this result for the full $\CM_\alpha$ is given in \cite[Lemma 2.4]{BM2019}. The proof for $\CM_\alpha^\delta$ is identical (in fact, it slightly simplifies), and relies on Luiro's formula for $\CM_\alpha^\delta$, that is, \cref{lemma:derivative Malpha}. We omit further details. For $\delta>0$, we have the following norm convergence.

\begin{proposition}\label{thm:large radious}
Let $0 < \alpha < d$, $\delta>0$, $f \in W^{1,1}(\R^d)$ and $\{f_j\}_{j \in \N} \subset W^{1,1}(\R^d)$ be such that $\| f_j - f \|_{W^{1,1}(\R^d)} \to 0$ as $j \to \infty$.
Let $K\subset\R^d$ be a compact set. 
$$
\| \nabla \CM^{\delta}_{\alpha}f-\nabla \CM_{\alpha}^{\delta}f_j\|_{L^{d/(d-\alpha)}(K)}\to0
\ \text{as}\ j\to \infty.$$
\end{proposition}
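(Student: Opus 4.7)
The plan is to reduce the desired $L^{d/(d-\alpha)}(K)$ convergence to an application of the dominated convergence theorem. The almost everywhere convergence $\nabla \CM_\alpha^\delta f_j(x) \to \nabla \CM_\alpha^\delta f(x)$ is already provided by \cref{lemma:ae convergence derivatives}, so all that is really needed is a uniform in $j$ integrable majorant on the compact set $K$.

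For such a majorant I would combine the refined Kinnunen--Saksman inequality \cref{pro_ks} with the defining property of the good balls. At a.e.\ $x \in \R^d$, choosing any $B = B(z,r) \in \CB_{\alpha, x}^\delta(f_j)$ (which is nonempty since $\delta>0$), the inequality \eqref{eq_ks} gives
\begin{equation*}
|\nabla \CM_\alpha^\delta f_j(x)| \leq (d-\alpha)\, r^{\alpha-1} \intav_B |f_j(y)|\dy = (d-\alpha)\, r^{-1} \CM_\alpha^\delta f_j(x) \leq (d-\alpha)\,\delta^{-1}\,\CM_\alpha^\delta f_j(x),
\end{equation*}
where the equality uses that $r^\alpha \intav_B |f_j| = \CM_\alpha^\delta f_j(x)$ by definition of $\CB_{\alpha,x}^\delta$, and the last step uses $r \geq \delta$. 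Next, the crude estimate $\intav_{B(z,r)} |f_j| \leq \omega_d^{-1} r^{-d}\|f_j\|_{L^1(\R^d)}$ together with $r^{\alpha-d} \leq \delta^{\alpha-d}$ (valid since $\alpha<d$ and $r\geq\delta$) yield $\CM_\alpha^\delta f_j(x) \leq \omega_d^{-1}\delta^{\alpha-d}\|f_j\|_{L^1(\R^d)}$. Since $\{f_j\}$ converges in $W^{1,1}(\R^d)$, the sequence $\{\|f_j\|_{L^1(\R^d)}\}$ is bounded, and we obtain a uniform in $j$ pointwise estimate
\begin{equation*}
|\nabla \CM_\alpha^\delta f_j(x)| \leq C_{d,\alpha,\delta}\,\sup_k \|f_k\|_{L^1(\R^d)} \quad \text{for a.e.\ } x \in \R^d,
\end{equation*}
together with the same pointwise bound for $|\nabla \CM_\alpha^\delta f|$.

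This constant upper bound is integrable on the compact set $K$, so $|\nabla \CM_\alpha^\delta f_j - \nabla \CM_\alpha^\delta f|^{d/(d-\alpha)}$ is dominated a.e.\ on $K$ by an integrable function uniformly in $j$, and the dominated convergence theorem combined with \cref{lemma:ae convergence derivatives} delivers the claimed norm convergence. The argument presents no serious obstacle; the decisive use of the hypothesis $\delta>0$ is to absorb the potentially singular factor $r^{\alpha-1}$ appearing in \eqref{eq_ks} into a harmless constant $\delta^{-1}$, which is precisely why the same route breaks down at $\delta=0$ and why the contributions from balls of small radius must be treated separately in the remainder of the paper.
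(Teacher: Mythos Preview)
Your proof is correct and follows the same overall scheme as the paper: a uniform pointwise constant bound on $|\nabla \CM_\alpha^\delta f_j|$ combined with the a.e.\ convergence of \cref{lemma:ae convergence derivatives} and the dominated convergence theorem on the compact set $K$. The only difference is in how the uniform bound is derived. The paper applies Luiro's formula (\cref{lemma:derivative Malpha}) directly to obtain
\[
|\nabla \CM_\alpha^\delta f_j(x)| \leq \frac{1}{\omega_d\,\delta^{d-\alpha}}\,\|\nabla |f_j|\|_{L^1(\R^d)},
\]
and then invokes \cref{lemma:convergence modulus in W11} to control $\|\nabla|f_j|\|_1$ uniformly in $j$. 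You instead pass through the refined Kinnunen--Saksman inequality (\cref{pro_ks}) and arrive at a bound in terms of $\|f_j\|_{L^1(\R^d)}$, which is bounded directly from the $W^{1,1}$-convergence without recourse to \cref{lemma:convergence modulus in W11}. Both routes are valid; yours trades one auxiliary lemma for another and is arguably slightly more self-contained, while the paper's is a one-line application of Luiro's formula. Neither approach offers a substantive advantage over the other.
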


\begin{proof}
By Proposition \ref{lemma:derivative Malpha} and Lemma \ref{lemma:convergence modulus in W11} there exists $j_0 \in \N$ such that
$$
|\nabla \CM_\alpha^\delta f_{j}(x)|\leq \frac{1}{\omega_d \, \delta^{d-\alpha}}\|\nabla |f_j|\|_1\leq \frac{1}{\omega_d \, \delta^{d-\alpha}}\|\nabla |f|\|_1+1 \qquad \text{ for all $j \geq j_0$}, \quad    \text{a.e.} \,\, x \in K.
$$
Furthermore, by Lemma \ref{lemma:ae convergence derivatives}
$$
\nabla \CM_\alpha^\delta f_j (x) \to \nabla \CM_\alpha^\delta f(x) \quad \text{a.e.} \quad as \:\:j \to \infty.
$$
The convergence on $L^{d/(d-\alpha)}(K)$ then follows from the dominated convergence theorem.
\end{proof}

\subsection{\texorpdfstring{$\delta$}{d}-convergence of \texorpdfstring{$\nabla \CM_\alpha^\delta f$}{grad Mad f}} Here we establish that $\nabla \CM_{\alpha}^{\delta}f$ provides a good approximation for $\nabla \CM_{\alpha}f$ in $L^{d/(d-\alpha)}(K)$ when $\delta\to 0$. This relies on the Theorem \ref{thm:Julian}.
\begin{lemma}\label{delta convergence}
Let $0<\alpha<d$ and $f\in W^{1,1}(\R^d)$. Then
\begin{equation*}
    \|\nabla \CM_\alpha f - \nabla \CM_{\alpha}^{\delta} f\|_{L^{d/(d-\alpha)}(K)}\to0 \qquad  \text{as}\quad  \delta\to0.
\end{equation*}
\begin{proof}
Recall from \S\ref{subsec:good balls} that for a.e.\ $x\in\R^d$ one has that if $B(z,r) \in \CB_{\alpha,x}^\delta$, then $r>0$. 
This and Luiro's formula \eqref{eq:Luiro} imply that for a.e.\ $x \in \R^d$ there exists $\delta_x>0$ such that  
$$
\nabla \CM^{\delta}_{\alpha}f(x)=\nabla\CM_{\alpha}f(x) \qquad \text{for all $0 \leq \delta<\delta_x$},
$$
and thus $\nabla \CM^{\delta}_{\alpha}f(x) \to \nabla\CM_{\alpha}f(x)$ for a.e.\ $x \in \R^d$ as $\delta \to 0$. Furthermore, as proven in \eqref{eq:maximal Julian delta}, for a.e.\ $x\in\R^d$ we have that
$$
|\nabla \CM^{\delta}_{\alpha}f(x)|\leq \CM_{\alpha,-1}f(x) \qquad \text{ for all $\delta\geq 0$.}
$$
Since $f \in W^{1,1}(\R^d)$, Theorem \ref{theo_mfdr} ensures that $\CM_{\alpha,-1}f\in L^{d/(d-\alpha)}(\mathbb{R}^d)$ and we can then conclude the result by the dominated convergence theorem.
\end{proof}
\end{lemma}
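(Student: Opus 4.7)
The plan is to deduce the claim from the dominated convergence theorem, so the work splits into establishing almost-everywhere pointwise convergence of $\nabla\CM_\alpha^\delta f$ to $\nabla\CM_\alpha f$ on $K$ and producing a $\delta$-uniform dominating function in $L^{d/(d-\alpha)}(K)$.

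For the pointwise convergence, I would use the observation from \S\ref{subsec:good balls} that for almost every $x\in\R^d$ the family $\CB_{\alpha,x}^0$ is nonempty and every ball in it has strictly positive radius. Hence for such $x$ there is some $\delta_x>0$ with $r(B)\geq\delta_x$ for some $B\in\CB_{\alpha,x}^0$, and in particular $\CM_\alpha^\delta f(x)=\CM_\alpha f(x)$ as well as $B\in\CB_{\alpha,x}^\delta$ for every $0\leq\delta<\delta_x$. Luiro's formula \eqref{eq:Luiro} applied to both $\CM_\alpha f$ and $\CM_\alpha^\delta f$ at $x$ with the same good ball $B$ then forces $\nabla\CM_\alpha^\delta f(x)=\nabla\CM_\alpha f(x)$ for all $\delta<\delta_x$. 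Letting $\delta\to 0$ this gives the required a.e.\ pointwise convergence, in fact eventually an equality.

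For the dominating function, I would use the refined Kinnunen--Saksman inequality \eqref{eq:maximal Julian delta} from Proposition \ref{pro_ks}, which gives
\[
|\nabla\CM_\alpha^\delta f(x)|\leq (d-\alpha)\CM_{\alpha,-1}f(x)\qquad\text{for a.e.\ }x\in\R^d,
\]
uniformly in $\delta\geq 0$. By the local bound of Theorem \ref{theo_mfdr} (applied, say, with $E=K$ or any fixed compact superset together with the finiteness of $\|\nabla f\|_{L^1(\R^d)}$), the function $\CM_{\alpha,-1}f$ lies in $L^{d/(d-\alpha)}(K)$, so it serves as the required $\delta$-uniform dominant.

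Combining the two ingredients, the dominated convergence theorem yields $\|\nabla\CM_\alpha f-\nabla\CM_\alpha^\delta f\|_{L^{d/(d-\alpha)}(K)}\to 0$ as $\delta\to 0$. I do not expect a serious obstacle here: all of the hard work is already packaged in Proposition \ref{lemma:derivative Malpha}, Proposition \ref{pro_ks}, and Theorem \ref{theo_mfdr}. The only mildly delicate point is the uniform-in-$\delta$ almost-everywhere stabilization, which relies on the Lebesgue differentiation theorem through the nonemptiness of $\CB_{\alpha,x}^0$ and the fact that optimal radii are strictly positive almost everywhere.
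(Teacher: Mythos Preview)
Your proposal is correct and follows essentially the same approach as the paper: pointwise a.e.\ stabilization via the strict positivity of optimal radii and Luiro's formula \eqref{eq:Luiro}, a $\delta$-uniform dominating function $\CM_{\alpha,-1}f$ via \eqref{eq:maximal Julian delta}, and integrability of the dominant through Theorem~\ref{theo_mfdr}, concluding by dominated convergence. The only cosmetic difference is that the paper invokes Theorem~\ref{theo_mfdr} on all of $\R^d$ rather than on $K$, which is immaterial.
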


\section{Proof of Theorem \ref{thm:main}}\label{sec:proof}

Let $f\in W^{1,1}(\R^d)$ and $\{f_j\}_{j \in \N}\subset W^{1,1}(\R^d)$ be a sequence of functions such that $\|f_j-f\|_{W^{1,1}(\R^d)}\to 0$ as $j\to\infty$. If $f=0$ then the result follows directly from the boundedness, that is \cref{thm:Julian}. From now on we assume that $f\neq0$.
Let \(\varepsilon>0\).
Then by Proposition \ref{prop:smallness} it is sufficient to prove that there exists $j^* \in \N$ such that
\begin{equation}\label{supergoal}
    \|\nabla \CM_\alpha f - \nabla \CM_\alpha f_j\|_{L^{d/(d-\alpha)}(K)} < 3\varepsilon
\end{equation}
for all $j \geq j^*$.
To this end, for any $\delta>0$, use the triangle inequality to bound
\begin{align}\label{triangle inequality}
\|\nabla \CM_\alpha f - \nabla \CM_\alpha f_j\|_{L^{d/(d-\alpha)}(K)}
&\leq
\|\nabla \CM_\alpha f - \nabla \CM_{\alpha}^{\delta} f\|_{L^{d/(d-\alpha)}(K)}
\\
&\qquad+
\|\nabla \CM_{\alpha}^{\delta} f - \nabla \CM_{\alpha}^{\delta} f_j\|_{L^{d/(d-\alpha)}(K)}
\notag \\
&\qquad+
\|\nabla \CM_{\alpha}^{\delta} f_j - \nabla \CM_\alpha f_j\|_{L^{d/(d-\alpha)}(K)}. \notag
\end{align}
To finish the proof, it suffices to show that for $\varepsilon>0$ fixed, there exist a \(\delta^*\) and a \(j^*\)
such that for \(\delta = \delta^*\) and all \(j\geq j^*\),
each of the summands on the right hand side of \eqref{triangle inequality} is bounded by \(\varepsilon\).
We choose $\delta^*$ depending on \(\varepsilon, K\) and \(f\), and $j^*$ depending on $\delta^*$, $\varepsilon$, $K$, $f$ and the sequence $\{f_j\}_{j \in \N}$.

For the first term, we know by Lemma \ref{delta convergence} that there exists a $\delta'>0$ such that
$$
\|\nabla \CM_\alpha f - \nabla \CM_\alpha^\delta f\|_{L^{d/(d-\alpha)}(K)}<\varepsilon
$$
for all $0 \leq \delta\leq\delta'$. For the second term, we have by Proposition \ref{thm:large radious}
that for every $\delta>0$ there exists a $j(\delta) \in \N$ such that
$$
\|\nabla \CM_{\alpha}^{\delta} f - \nabla \CM_{\alpha}^{\delta} f_j\|_{L^{d/(d-\alpha)}(K)}
<\varepsilon
$$
for all $j\geq j(\delta)$. 
The rest of the section is devoted to proving a favourable bound for the third term. More precisely, we will show that there are \( \tilde \delta>0\) and \(\tilde j\in\N\) such that for all \(0 \leq \delta\leq \tilde \delta\) and \(j\geq \tilde j\), 
\begin{equation}\label{goal third term}
    \|\nabla \CM_{\alpha}^{\delta} f_j - \nabla \CM_\alpha f_j\|_{L^{d/(d-\alpha)}(K)} < \varepsilon.
\end{equation}
Temporarily assuming this, we can then conclude that for \(\delta= \delta^*:=\min\{\delta', \tilde \delta\}\) and \(j\geq j^*:=\max\{j(\delta^*), \tilde j\}\), the right-hand side of \eqref{triangle inequality} is bounded by at most \(3\varepsilon\), as desired for \eqref{supergoal}.

We now turn to the proof of \eqref{goal third term}.
We start by noting that there exists a
$\lambda_0>0$ and a $j_0 \in \N$
such that for all $j\geq j_0$ and \(x\in K\) we have $\CM_\alpha f_j(x) > \lambda_0$. Indeed, as $f \in L^1(\R^d)$, there exists a ball $B_0$ that contains $K$ with 
\(
\int_{B_0} |f|
> \frac{1}{2}
\int_{\R^d} |f|
.
\)
As $\|f_j - f \|_1 \to 0$ as $j \to 0$, by the triangle inequality, there exists \(j_0>0\) such that for all \(j\geq j_0\) we have
\(
\int_{B_0} |f_j|
>
\frac{1}{2}\int_{B_0} |f| > \frac{1}{4} \int_{\R^d} |f|
.
\)
Then, for every \(j\geq j_0\) and \(x\in K\) we have
\begin{equation*}
\CM_\alpha f_j (x) \geq 2^{\alpha} r(B_0)^{\alpha} \intav_{B(x, 2r(B_0))} |f_j|  > \frac{(2r(B_0))^{\alpha-d}}{4\, \omega_d}   \int_{\R^d} |f|, 
\end{equation*}
where in the last inequality we have used that $B(x, 2r(B_0)) \supset B_0$ for all $x \in K$. Thus, we can take $\lambda_0$ to be the right-hand side of the inequality above.
Furthermore by \cref{lemma:derivative Malpha},
if there exists a $B\in\CB_{\alpha,x}(f_j)$ such that $r(B)\geq\delta$ then $\nabla\CM_\alpha f_j(x)=\nabla\CM_{\alpha}^{\delta} f_j(x)$.
Define
\begin{equation*}
    E_{\lambda_0, \delta, j}:=\Big\{ x \in K: \text{ if } B \in \mathcal{B}_{\alpha,x}(f_j), \text{ then} \,\,\,\,  r(B) < \delta \,\,\,\, \text{and} \,\,\,\,  r(B)^\alpha \intav_B |f_{j}|>\lambda_0
    \Big\}.
\end{equation*}
By the previous two observations, Proposition \ref{pro_ks} and a crude application of the triangle inequality, one has 
\begin{align*}
\|\nabla \CM_{\alpha}^{\delta} f_j - \nabla \CM_\alpha f_j\|_{L^{d/(d-\alpha)}(K)}  & =\|\nabla \CM_\alpha^\delta f_j - \nabla \CM_{\alpha} f_j\|_{L^{d/(d-\alpha)}(E_{\lambda_0,\delta,j})} \\
& \leq 2 (d-\alpha) \, \| \CM_{\alpha, -1} f_j \|_{L^{d/(d-\alpha)}(E_{\lambda_0,\delta,j})}.
\end{align*}
for all $j \geq j_0$.
Define the indexing set
\begin{equation*}
    \mathcal{I}_{\lambda_0,\delta,j} := \Big\{B \in \mathcal{B}_{\alpha, x}(f_j)  \, : x \in K, \,\, r(B) < \delta \,\, \text{ and } \,\,  r(B)^\alpha \intav_B |f_j| >\lambda_0 \Big\}
\end{equation*}
and consider the set
\begin{equation*}
    D_{\lambda_0,\delta,j}:=\bigcup_{B \in \mathcal{I}_{\lambda_0,\delta,j} } cB,
\end{equation*}
where \(c\) is the constant from \cref{theo_mfdr}.
Then, by Theorem \ref{theo_mfdr}, we have
\[
\|\CM_{\alpha,-1}f_j\|_{L^{d/(d-\alpha)}(E_{\lambda_0,\delta,j})}
\leq C_{d,\alpha}
\|\nabla f_j\|_{L^1(D_{\lambda_0,\delta,j})}
\]
for any $\delta>0$. 
Thus, the proof of \eqref{goal third term} is reduced to showing that there exist a $\tilde \delta>0$ and a $j_1\in\N$ such that for all $j\geq j_1$ and \(0 \leq \delta \leq \tilde \delta\) we have
\begin{equation}\label{smallness f_j}
    \| \nabla f_j \|_{L^1(D_{\lambda_0, \delta, j})} < \frac{\varepsilon}{2(d-\alpha)C_{d,\alpha}},
\end{equation}
as one can then take $\tilde{j}:=\max \{j_0,j_1\}$.

In order to prove \eqref{smallness f_j}, we first use the triangle inequality and that $\| \nabla f_j -\nabla f\|_{L^1(\R^d)} \to 0$ as $j \to \infty$ to find a $j_2 \in \N$ such that
\begin{equation}\label{f_j to f}
\|\nabla f_j\|_{L^1(D_{\lambda_0,\delta,j})} \leq \|\nabla f\|_{L^1(D_{\lambda_0,\delta,j})} + \frac{\varepsilon}{4(d-\alpha)C_{d,\alpha}}.
\end{equation}
for any $\delta>0$ and $j \geq j_2$.

Next, let $x\in D_{\lambda_0,\delta,j}$.
Then there is a $B \in \mathcal{I}_{\lambda_0, \delta, j}$ with \(x\in cB\).
So, by \cref{lem_poincare}, we have
\[
\lambda_0\leq c^d r(B)^\alpha \intav_{cB} |f_j|
 \leq C_{d,\alpha, c} \, c^{d+1} r(B)^{\alpha+1} \intav_{cB}|\nabla f_j|
 \leq C_{d,\alpha, c}\, c^{d-\alpha+1} \delta \, \widetilde{M}_\alpha|\nabla f_j|(x),
\]
where $\widetilde M_\alpha$ in the above inequality  denotes the uncentered fractional maximal operator.
Hence, by the weak $(1,d/(d-\alpha))$ inequality for $\widetilde{M}_\alpha$,
\begin{align}
|D_{\lambda_0,\delta,j}|
& \leq
\biggl|\biggl\{x : \widetilde{M}_\alpha|\nabla f_j|(x)\geq\frac{\lambda_0}{C_{d,\alpha,c}c^{d-\alpha+1}\delta}\biggr\}\biggr| \notag \\
&
\leq C_{d,\alpha,c,\lambda_0}
\delta^{d/(d-\alpha)}\|\nabla f_j\|_1^{d/(d-\alpha)}
\notag \\
&\leq C_{d,\alpha,c,\lambda_0}
\delta^{d/(d-\alpha)}\left(1+\|\nabla f\|_1^{d/(d-\alpha)}\right) \label{size D set}
\end{align}
if $j\geq j_3$ for some $j_3\in\N$, using that $\| \nabla f_j - \nabla f \|_{L^1(\R^d)} \to 0$ as $j \to \infty$.

Finally, note that as $\nabla f \in L^1(\R^d)$, there exists \(\rho>0\) 
such that for all \(A\subseteq\R^d\) satisfying \(|A|<\rho\), one has
\begin{equation}\label{app by simple}
\| \nabla f \|_{L^1(A)}
<
\frac{\varepsilon}{4(d-\alpha) C_{d,\alpha}}. 
\end{equation}
As the right-hand side of \eqref{size D set} goes to zero for \(\delta\rightarrow0\) uniformly in \(j\), there exists $\tilde{\delta}>0$ such that $|D_{\lambda_0,\delta,j}|< \rho$ for all $j \geq j_3$ and $\delta < \tilde{\delta}$. Thus, taking $j_1:=\max \{j_2,j_3\}$,  \eqref{smallness f_j} follows from combining \eqref{f_j to f} and \eqref{app by simple} with $A=D_{\lambda_0, \delta, j}$. This implies the claimed inequality \eqref{goal third term} and therefore finishes the proof of \cref{thm:main}. \qed

\begin{remark*}
Note that in the above proof, instead of using \cref{delta convergence} to bound the first term in \eqref{triangle inequality},
we could have also bounded it running the same scheme as for the third term.
\end{remark*}

\bibliography{Reference}
\bibliographystyle{amsplain}

\end{document}